\documentclass[14pt, a4paper]{article}
 \usepackage{amsmath,amssymb}
\usepackage[margin=21mm]{geometry}
\usepackage[utf8]{inputenc}
\usepackage{graphics}
\usepackage{xspace}
\usepackage{color}
 \usepackage{amsthm}
 \usepackage[old]{old-arrows}
\usepackage{amsfonts}
\usepackage{booktabs}
%%%%%%%%%%%%%%%%%%%%%%%%%%%%%%%%%%%%%%%%%%%%%%%%
\newcommand{\R}{{\mathbb R}}

\newcommand{\dyle}{\displaystyle}
\newcommand{\dint}{\dyle\int}

\newtheorem{Theorem}{Theorem}[section]
\newtheorem{Corollary}[Theorem]{Corollary}

\newtheorem{Proposition}[Theorem]{Proposition}
\newtheorem{Definition}[Theorem]{Definition}

\newtheorem{remark}[Theorem]{Remark}
\usepackage[mathscr]{euscript}
\usepackage{mathrsfs}
\usepackage{enumerate}
 \usepackage{cite}
\usepackage{hyperref}
\newtheorem{theorem}{Theorem}[section]
\newtheorem{lemma}[theorem]{Lemma}

\newtheorem{definition}[theorem]{Definition}

\catcode`@=12

\makeatother
\usepackage[english]{babel}

 \usepackage[hyperpageref]{backref}
\title{On elliptic problems with mixed operators and Dirichlet-Neumann boundary conditions }
\date{}
\author{ Tuhina Mukherjee$^{1}$\thanks{Corresponding author(tuhina@iitj.ac.in)}, Lovelesh Sharma$^{1}$  \\
     \small $^{1}$ Department of Mathematics, Indian Institute of Technology Jodhpur, Rajasthan 342030, India \\
         }

\providecommand{\keywords}[1]
{
  \small	
  \textbf{\textit{Keywords---}} #1
}
\numberwithin{equation}{section}
\begin{document}
\maketitle \vspace{-1.8\baselineskip}
\begin{abstract}
 In this paper, we study  the existence, nonexistence and multiplicity of positive solutions to the problem given by
 \begin{equation*} \label{1}
    \left\{\begin{split} \mathcal{L}u\: &= \lambda u^{q} + u^{p}, \quad u>0 ~~ \text{in} ~\Omega, \\
      u&=0~~\text{in} ~~{D^c},\\
 \mathcal{N}_s(u)&=0 ~~\text{in} ~~{\Pi_2}, \\
 \frac{\partial u}{\partial \nu}&=0 ~~\text{in}~~ \partial \Omega \cap \overline{\Pi_2}.
    \end{split} \right.\tag{$P_\lambda$}
\end{equation*}
 {where  $D= \left(\Omega \cup {\Pi_2} \cup (\partial\Omega\cap\overline{\Pi_2})\right)$ and $D^c$ is the complement of $D$, $\Omega \subseteq \mathbb{R}^n$ is a non empty open set, $\Pi_{1}$, $\Pi_{2}$ are open subsets of $\mathbb{R}^n\setminus{\bar \Omega }$ such that $\overline{{\Pi_{1}} \cup {\Pi_{{2}}}}= \mathbb{R}^n\setminus{\Omega}$, $\Pi_{1} \cap \Pi_{{2}}=  \emptyset$ and $\Omega\cup \Pi_2$ is a bounded set with smooth boundary}, $\lambda >0$ is a  real parameter, $ 0 < q < 1<p $, $n>2$ and
 $\mathcal{L}=  -\Delta+(-\Delta)^{s},~ \text{for}~s \in  (0, 1).$
 We first present a functional setting to study any problem involving $\mathcal L$ under mixed boundary conditions in the presence of concave-convex power nonlinearity, {for a suitable range of $\lambda$, $q$ and $p$}. Our article also contains results related to Picone's identity, strong maximum principles and comparison principles.
 \end{abstract}
 
 \keywords {Mixed local-nonlocal operators, concave-convex nonlinearity, mixed boundary conditions, existence and nonexistence results, maximum principle.}\\
 
 \textbf{Mathematics Subject Classification:} 35A15, 35J25, 35J20.
\section{Introduction}
We investigate the existence and  qualitative properties of weak solutions to the following problem
\begin{equation*} 
    \left\{\begin{split} \mathcal{L}u\: &= \lambda u^{q} + u^{p}, \quad u>0 ~~ \text{in} ~\Omega, \\
      u&=0~~\text{in} ~~{D^c},\\
 \mathcal{N}_s(u)&=0 ~~\text{in} ~~{\Pi_2}, \\
 \frac{\partial u}{\partial \nu}&=0 ~~\text{in}~~ \partial \Omega \cap \overline{\Pi_2}.
    \end{split} \right.\tag{$P_\lambda$}
\end{equation*}

{ where  $D= \left(\Omega \cup {\Pi_2} \cup (\partial\Omega\cap\overline{\Pi_2})\right)$, $\Omega \subseteq \mathbb{R}^n$ is a non empty open set, $\Pi_{1}$, $\Pi_{2}$ are open subsets of $\mathbb{R}^n\setminus{\bar \Omega }$ such that $\overline{{\Pi_{1}} \cup {\Pi_{{2}}}}= \mathbb{R}^n\setminus{\Omega}$, $\Pi_{1} \cap \Pi_{{2}}=  \emptyset$ and $\Omega\cup \Pi_2$ is a bounded set with smooth boundary}, $\lambda>0$ is a real parameter, $0 < q < 1<p $, $n>2$ and
 \begin{equation}\label{A}
\mathcal{L}=  -\Delta+(-\Delta)^{s},~ \text{for}~s \in  (0, 1).
 \end{equation}
The term "mixed" describes the operator as a combination of local and nonlocal operators. In our case, the operator $\mathcal{L}$ in \eqref{1} is generated by superposition of the classical Laplace operator $-\Delta$ and fractional Laplace operator $(-\Delta)^{s}$, which is for a fixed parameter $s \in (0,1)$  defined by,
$${(- \Delta)^{s}u(x)} = C_{n,s}~ P.V. \int_{\mathbb{R}^n} {\dfrac{u(x)-u(y)}{|x-y|^{n+2s}}} ~ dy. $$ 
The term "P.V." stands for Cauchy's principal value, while $C_{n,s}$ is a normalizing constant whose explicit expression is given by
 $$C_{n,s}= \left( \int_{\mathbb{R}^{n}} {\frac{1-cos({\zeta}_{1})}{|\zeta|^{n+2s}}}~d \zeta \right)^{-1}.$$ 
{ In the literature, there are numerous definitions of nonlocal normal derivatives. We consider the one suggested in \cite{MR3651008} for smooth functions $u$ as
\begin{equation}\label{normal}
\mathcal{N}_{s}u(x)= C_{n,s}\dint_{\Omega} \dfrac{u(x)-u(y)}{|x-y|^{n+2s}}\,dy, \qquad  x\in \mathbb{R}^n\setminus\bar{\Omega}.
\end{equation}
}
 Without discussing the specific instances of how this nonlocal operator appears in real-world situations and the reasons behind studying problems involving such operators, we suggest referring to the well-known Hitchhiker's Guide \cite{MR2944369} and the accompanying references for a deeper understanding of the topic.
 The numerous applications serve as the motivation for the study of mixed operators of the kind $\mathcal{L}$ in the problem \eqref{1} such as the theory of optimum searching, biomathematics, and animal forging, we refer \cite{dipierro2022non, MR4249816, MR2924452, MR3590678}.  {Other} common uses include heat transmission in magnetized plasmas, see \cite{blazevski2013local}. These types of operators naturally develop{;} in the applied sciences to investigate the
changes in physical phenomena that have both local and nonlocal effects and these operators occur, for example, in bi-modal power law distribution systems, see \cite{MR4225516} and also this type of operators emerge in models derived from combining two distinct scaled stochastic processes, for a comprehensive explanation of this phenomenon, we refer \cite{MR1640881}. In recent years, there has been a considerable focus on the investigation of elliptic problems involving mixed-type operator $\mathcal{L}$, as in \eqref{1}, exhibiting both local and nonlocal behaviour. 
Let us put some light on literature involving problems with mixed operator $\mathcal L$, among the existing long list. {Cassani et al. in \cite{Cassani} have studied spectral properties, existence, nonexistence and regularity results for the solutions to the class of one-parameter family of elliptic equations which involves mixed local and nonlocal operators}. Biagi, Dipierro, Valdinoci {and  Vecchi} \cite{MR4387204} have provided a comprehensive analysis of a mixed local and nonlocal elliptic problem. In their study, they establish the existence of solutions to the problem, explore maximum principles that govern the behaviour of these solutions, and investigate the interior Sobolev regularity of the solutions. Their findings contribute to a deeper understanding of the properties and behavior of solutions to the following type of problem
\begin{equation}\label{B} 
      \begin{cases}    
     \mathcal{L}u =  g(u), \quad u>0 \quad 
 \text{in }\Omega,\\ 
 ~~u=0 ~~\text{in} ~~\mathbb{R}^n\setminus \Omega. \\
 \end{cases} 
 \end{equation}
 They have studied various interesting inequalities pertaining to mixed operator $\mathcal L$ in \cite{ MR4391102}. Lamao et al. in \cite{MR4357939} investigate the behaviour and properties of solutions to problem \eqref{B}, specifically focusing on their summability characteristics. Arora et. al \cite{arora2021combined} studied the existence and non-existence results of the following type of (singular as well as non-singular) problem 
\[\mathcal L u = \frac{f(x)}{u^\gamma},\; u>0\;\text{in}\; \Omega,\; u=0\;\text{in}\; \mathbb R^n \setminus \Omega,\]
where $n\geq 2$, $\gamma\geq 0$ and $f$ are in a suitable class of Lebesgue functions.  We also refer \cite{MR3445279} for interested readers, where we point out that \cite{MR3445279} contains a study of the Neumann problem with mixed operator. The non linear generalisation of $\mathcal L$ given by $-\Delta_p +(-\Delta_p)^s$ has also started gaining attention, relating them we cite \cite{MR4444761,MR4142782}. 
Dipierro et. al in \cite{MR4438596} was the first who consider mixed operator problems in the presence of classical as well as non-local  Neumann boundary conditions. Their recent article  discusses the spectral properties and the $L^\infty$
bounds associated with a mixed local and nonlocal problem, also in relation to some concrete motivations arising from
population dynamics and mathematical biology. It is worth commenting that none of these articles studies mixed operator problems with mixed Dirichlet- Neumann boundary conditions. This made us inquisitive about what happens when we set up a PDE involving mixed operator $\mathcal L$ under boundary conditions involving Dirichlet datum in some part and Neumann(local and non-local both) datum in the remaining part? { Our paper is distinguished by its approach to answering this question, which involved concave-convex type nonlinearity(see \eqref{1}) and the combination of mixed operator as well as mixed Dirichlet- Neumann boundary, which is the striking feature of our paper.} There exists no single article at present that studies such a problem with any kind of nonlinearity, hence our article is a breakthrough in this regard. We have also established the well known Picone's identity, comparison and strong maximum principles.
Last but not least, we try to provide a glimpse of problems available in literature, involving Dirichlet-Neumann's mixed boundary datum to the readers. Over the past few decades, mixed Dirichlet and Neumann boundary problems associated with elliptic equations 
  \begin{equation*} { -\Delta u = g(x,u)}  ~~\;\text{in}\;~~ \Omega,
 \end{equation*}
 has been extensively studied. In particular, the study of wave phenomena, heat transfer, electrostatics, elasticity and hydrodynamics in physics and engineering often involves elliptic problems with mixed Dirichlet-Neumann boundary conditions. 
  For example, one can see \cite{MR647124,MR1472351,MR2378088} and the references therein for motivation. Very recently, researchers have started looking at Dirichlet-Neumann mixed boundary problems associated with fractional Laplace operator, see \cite{MR3912863, MR2078192, MR4319042, MR4343111}.
  Focusing on the following nonlocal problem with mixed boundary and concave-convex type nonlinearity
  \begin{equation}\label{C} 
      \begin{cases}
      (-\Delta)^{s}u =  \lambda u^{q} + u^{p}, \quad u>0 \quad
 \text{in }\Omega,\\
 ~~~~\mathcal A(u) = 0 \quad \text{on}~ \partial\Omega =  \sum_{D} \cup \sum_{\mathcal{N}},
  \end{cases}
 \end{equation}
 where  $\Omega {\subset} \mathbb{R}^N$  {is} a smooth bounded domain, $0<s<1$, $\lambda >0$ is a  real parameter, {$N>2s$}  and
 $$\mathcal{A}(u)= u \mathbf{1}_{\sum_{D}} + {\partial_{\nu}u}\mathbf{1}_{ \sum_{\mathcal{N}}}, \quad{\partial_{\nu}=\frac{\partial }{\partial{\nu}}},~\text{and}~ \mathbf{1}~\text{denotes characteristic function},$$
we state that  the case $ 0 < q < 1<p<\frac{N+2s}{N-2s} $ has been studied in \cite{MR4151104} whereas the critical exponent case $p=\frac{N+2s}{N-2s} $ has been recently investigated in \cite{MR4535301}. Here $\sum_{D}$, $\sum_{\mathcal{N}}$ are smooth (N-1)-dimensional submanifolds of $\partial \Omega$ such that  $\sum_{D} \cap \sum_{\mathcal{N}}=  \emptyset $. Bisci et al. in \cite{Molica} have studied subcritical nonlocal problems with mixed boundary conditions.  Here, the authors studied the existence of multiple solutions  by using variational and topological arguments based on linking and
$\nabla$-theorems. 
Abdellaoui et al. in \cite{MR3809115} proved the existence of monotonic minimal solutions for \eqref{C} and multiple solutions in the subcritical case. We prove similar results as in \cite{MR3809115} pertaining to problem \eqref{1}, in the same spirit. 
Due to the nature of the problem \eqref{1}, it can be approached using the variational methods. Precisely, the desired solution can be regarded as critical points of the  $C^1$ functional $J_\lambda: {\mathcal{X}^{1,2}}(D)\to \mathbb R$ defined by
\begin{equation}\label{ee1.1}
    J_{\lambda}(u):= \frac{1}{2}\int_{\Omega} |\nabla u|^2~ dx+ \frac{C_{n,s}}{4}\int_{{D_{\Omega}}} \frac{|u(x)-u(y)|^2}{|x-y|^{n+2s}} \\ dxdy -\frac{\lambda}{q+1}\|u^+\|^{q+1}_{L^{q+1}(\Omega)}-\frac{1}{p+1}\|u^+\|^{p+1}_{L^{p+1}(\Omega)},\\
\end{equation}
 where {${D_\Omega =\mathbb{R}^{2n} \backslash (\Omega^{c} \times \Omega^{c})}$,
 $u^{+}= \max\{u,0\}$ and $\mathcal{X}^{1,2}(D)$ is defined in the next section.}

Following are the main results concerning \eqref{1}.
\begin{theorem}\label{T1.1}
Suppose $0<s<1$ and $0<q<1<p$, then there exists a $\Lambda>0$ such that
\begin{enumerate}
    \item Problem \eqref{1} has a minimal solution $u_{\lambda}$ such that $J(u_{\lambda})<0$ when $\lambda \in (0,\Lambda)$, satisfying $u_{\lambda_1}<u_{\lambda_2}$ for $\lambda_1<\lambda_2.$
    \item Problem \eqref{1} has no weak solution, if $\lambda>\Lambda$. 
    \item Problem \eqref{1} has at least one weak solution, if $\lambda=\Lambda.$
\end{enumerate}
\end{theorem}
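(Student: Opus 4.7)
Following the Ambrosetti--Brezis--Cerami / Alama scheme, define the threshold
\[
\Lambda := \sup\{\lambda > 0 : (P_\lambda) \text{ admits a positive weak solution}\}.
\]
The plan is to show in turn: (a) $\Lambda>0$ and a minimal solution $u_\lambda$ with $J_\lambda(u_\lambda)<0$ exists for every $\lambda\in(0,\Lambda)$, strictly increasing in $\lambda$; (b) $\Lambda<\infty$; (c) $(P_\Lambda)$ itself is solvable. Each of these will be handled through the functional framework $\mathcal{X}^s(D)$ of Section~2 and the auxiliary machinery of Section~3.

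\textbf{Existence for small $\lambda$ and the monotone family.} For small $\lambda$ I would construct a positive solution by minimising $J_\lambda$ on a sufficiently small ball in $\mathcal{X}^s(D)$. Using the Sobolev-type embedding one has
\[
J_\lambda(u)\geq \tfrac{1}{2}\|u\|^2 - C_1\lambda\|u\|^{q+1} - C_2\|u\|^{p+1},
\]
so, since $q+1<2<p+1$, there is a small sphere on which $J_\lambda>0$ while $J_\lambda(t\phi)<0$ for any fixed $\phi>0$ and $t$ small enough; a local minimiser $u_\lambda$ inside the ball is then a weak solution with $J_\lambda(u_\lambda)<0$, and positivity follows from the strong maximum principle for $\mathcal L$ adapted to the mixed boundary setting. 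If $(P_{\lambda_0})$ is solvable, its solution serves as a super-solution for $(P_\lambda)$ whenever $\lambda<\lambda_0$; a small multiple of a positive first eigenfunction $\phi_1$ of $\mathcal L$ under our mixed boundary conditions serves as sub-solution, and the sub--super-solution method produces a solution for every such $\lambda$. Taking pointwise infima yields the minimal $u_\lambda$, and the strict monotonicity $u_{\lambda_1}<u_{\lambda_2}$ for $\lambda_1<\lambda_2$ follows by applying the strong maximum principle to $u_{\lambda_2}-u_{\lambda_1}$, which satisfies a differential inequality with nonnegative right-hand side.

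\textbf{Non-existence for large $\lambda$.} Let $\phi_1>0$ be the first eigenfunction of $\mathcal L$ with the mixed boundary data and $\lambda_1$ the corresponding eigenvalue. Testing $(P_\lambda)$ against $\phi_1$,
\[
\lambda_1\int_\Omega u\phi_1\,dx \;=\; \int_\Omega(\lambda u^q+u^p)\phi_1\,dx \;\geq\; c(p,q)\,\lambda^{(p-1)/(p-q)}\int_\Omega u\phi_1\,dx,
\]
using the elementary inequality $\lambda t^q+t^p\geq c(p,q)\,\lambda^{(p-1)/(p-q)}\,t$ for $t\geq 0$. This forces $\lambda\leq(\lambda_1/c)^{(p-q)/(p-1)}$, and hence $\Lambda<\infty$.

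\textbf{The endpoint $\lambda=\Lambda$, the main obstacle.} Take $\lambda_n\uparrow\Lambda$ with the monotone minimal solutions $u_{\lambda_n}$, each satisfying $J_{\lambda_n}(u_{\lambda_n})<0$. Testing $(P_{\lambda_n})$ with $u_{\lambda_n}$ gives $\|u_{\lambda_n}\|^2=\lambda_n\|u_{\lambda_n}\|_{q+1}^{q+1}+\|u_{\lambda_n}\|_{p+1}^{p+1}$; eliminating the $L^{p+1}$-term via $J_{\lambda_n}(u_{\lambda_n})<0$ yields
\[
\bigl(\tfrac{1}{2}-\tfrac{1}{p+1}\bigr)\|u_{\lambda_n}\|^2 \;\leq\; \lambda_n\bigl(\tfrac{1}{q+1}-\tfrac{1}{p+1}\bigr)\|u_{\lambda_n}\|_{q+1}^{q+1},
\]
which, combined with Sobolev embedding and $q+1<2$, produces a uniform bound $\|u_{\lambda_n}\|\leq C$ in $\mathcal{X}^s(D)$. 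Monotone pointwise convergence $u_{\lambda_n}\nearrow u^\ast$, dominated/monotone convergence in the local and nonlocal bilinear forms of $\mathcal L$, and the same convergence in the nonlinear terms let us pass to the limit in the weak formulation, giving a positive weak solution of $(P_\Lambda)$. The hard part is precisely this a priori bound and the limit passage in the hybrid framework: the bilinear form on $\mathcal{X}^s(D)$ mixes the Dirichlet energy, a Gagliardo seminorm on $D_\Omega$, and boundary data that are Dirichlet on $\Pi_1$, nonlocal Neumann on $\Pi_2$, and classical Neumann on $\partial\Omega\cap\overline{\Pi_2}$, and one must make sure that the comparison principles, test-function manipulations, and Picone/Stampacchia-type tools used throughout are all compatible with this splitting.
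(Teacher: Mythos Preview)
Your overall scheme follows the paper's closely, and the alternative technical choices you make (a small multiple of a first eigenfunction as sub-solution instead of the paper's rescaled sublinear solution $\bar Z_\lambda=\lambda^{1/(1-q)}Z$ with $\mathcal L Z=Z^q$; eigenfunction testing plus the algebraic inequality $\lambda t^q+t^p\ge c\,\lambda^{(p-1)/(p-q)}t$ for $\Lambda<\infty$ instead of the paper's Picone-inequality route via $\bar Z_\lambda\le u_\lambda$) are both legitimate. The endpoint argument for $\lambda=\Lambda$ is also essentially the paper's.

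There is, however, a real gap in the negative-energy claim $J_\lambda(u_\lambda)<0$, and your endpoint argument depends on it. You only obtain $J_\lambda<0$ for the \emph{local minimiser} found by minimising on a small ball, and that geometry (a sphere on which $J_\lambda>0$) is available only for \emph{small} $\lambda$: when $\lambda$ is close to $\Lambda$, the lower envelope $\tfrac12 t^2-C_1\lambda t^{q+1}-C_2 t^{p+1}$ can be negative for all $t>0$, so no such sphere exists. Moreover, the minimal solution you subsequently extract by sub--super-solution iteration is not, a priori, the same object as that local minimiser. Hence you have not established $J_\lambda(u_\lambda)<0$ for the minimal solution across the full range $(0,\Lambda)$, which is precisely what you invoke when taking $\lambda_n\uparrow\Lambda$. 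The paper closes this gap by a stability argument: it shows that the first eigenvalue of the linearised operator $\mathcal L-\big(\lambda q\,u_\lambda^{q-1}+p\,u_\lambda^{p-1}\big)$ with the mixed boundary data is nonnegative (otherwise a downward perturbation $u_\lambda-a\varphi$ would be a super-solution strictly below $u_\lambda$, contradicting minimality). Testing this with $u_\lambda$ gives $\eta(u_\lambda)^2\ge \lambda q\|u_\lambda\|_{q+1}^{q+1}+p\|u_\lambda\|_{p+1}^{p+1}$; combining with $J'_\lambda(u_\lambda)=0$ yields $(p-1)\|u_\lambda\|_{p+1}^{p+1}\le \lambda(1-q)\|u_\lambda\|_{q+1}^{q+1}$ and hence $J_\lambda(u_\lambda)<0$ for every $\lambda\in(0,\Lambda)$. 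This stability step is the missing ingredient in your plan.
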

\begin{theorem}\label{T2.1}
    Suppose $0<s<1$, $0<q<1<p< \frac{n+2}{n-2}$ and $\lambda\in (0,\Lambda)$, then problem \eqref{1} has a second solution $v_\lambda$ such that $v_\lambda > u_\lambda$.
\end{theorem}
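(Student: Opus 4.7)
\medskip

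\noindent\textbf{Proof plan for Theorem \ref{T2.1}.}
The strategy is the classical Alama--Tarantello translation combined with a mountain pass argument applied to a shifted functional. Write any candidate second solution as $v=u_\lambda+w$ with $w\ge 0$, so that $v>u_\lambda$ provided $w\not\equiv 0$. Substituting in \eqref{1} and using that $u_\lambda$ already satisfies the equation, $w$ must solve a problem of the form $\mathcal L w=g_\lambda(x,w)$ with the same mixed boundary data, where
\[
g_\lambda(x,t):=\lambda\bigl((u_\lambda+t_+)^q-u_\lambda^q\bigr)+\bigl((u_\lambda+t_+)^p-u_\lambda^p\bigr),\qquad t\in\mathbb{R},
\]
extended by $g_\lambda(x,t)=0$ for $t\le 0$. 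The associated energy
\[
\widetilde J_\lambda(w)=\tfrac12\!\int_{\mathbb R^n}|\nabla w|^2\,dx+\tfrac{C_{n,s}}{2}\!\int_{D_\Omega}\!\frac{|w(x)-w(y)|^2}{|x-y|^{n+2s}}\,dxdy-\int_\Omega G_\lambda(x,w)\,dx
\]
on $\mathcal X^s(D)$ is $C^1$, has $\widetilde J_\lambda(0)=0$, and its nontrivial nonnegative critical points are precisely the sought shifts $w=v_\lambda-u_\lambda$. A strong maximum principle for $\mathcal L$ (applicable since $u_\lambda>0$ in $\Omega$ by Theorem \ref{T1.1}) will promote $w\ge 0, w\not\equiv 0$ to the strict inequality $v_\lambda>u_\lambda$.

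Next I would establish the mountain pass geometry of $\widetilde J_\lambda$. The fact that $u_\lambda$ is the \emph{minimal} solution produced in Theorem \ref{T1.1} by minimization on a suitable sub/supersolution interval, combined with a Brezis--Nirenberg style local-minimum argument transferring $C^1$-local minimality into $\mathcal X^s(D)$-local minimality (adapted to the mixed operator as in \cite{MR4151104}), yields that $w=0$ is a strict local minimum of $\widetilde J_\lambda$, with a positive mountain pass barrier on some small sphere $\|w\|=\rho$. The superlinear growth ($p>1$) of the leading term in $g_\lambda$ makes $\widetilde J_\lambda(tw_0)\to-\infty$ along any fixed $w_0\ge 0,\, w_0\not\equiv 0$, so a mountain pass level $c_\lambda>0$ is well defined, and a $(PS)_{c_\lambda}$ sequence is produced by Ekeland's variational principle / deformation.

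Finally, one must recover compactness of this sequence. In the subcritical regime $p<\tfrac{n+2}{n-2}$ this is standard: the embedding of $\mathcal X^s(D)$ into $L^{p+1}(\Omega)$ is compact, so $(PS)$ holds at every level, a nontrivial nonnegative critical point $w_\lambda$ of $\widetilde J_\lambda$ is obtained, and $v_\lambda:=u_\lambda+w_\lambda$ is the desired second solution. The genuine obstacle is the critical case $p=\tfrac{n+2}{n-2}$, where compactness fails exactly at the threshold
$c^{*}=\tfrac{1}{n}S^{n/2}$
(with $S$ the Sobolev constant for $\mathcal X^s(D)$, governed by the local part $-\Delta$). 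Here I would build concentrating test functions $U_\varepsilon$ from Talenti/Aubin bubbles truncated near a suitably chosen boundary point; choosing this point on $\partial\Omega\cap\overline{\Pi_2}$ (the Neumann portion) should give the half-space improvement $c^{*}=\tfrac{1}{2n}S^{n/2}$ familiar from \cite{MR4535301,MR3809115}. The estimates must control three competing contributions: the $H^1$-Dirichlet energy (leading order), the Gagliardo seminorm over $D_\Omega$ (lower order, $O(\varepsilon^{2-2s})$), and the interaction with $u_\lambda$ which, thanks to $q<1$ and $u_\lambda>0$ on $\overline\Omega$, produces a strictly negative correction large enough to push
\[
\sup_{t\ge 0}\widetilde J_\lambda(tU_\varepsilon)<c^{*},\qquad\text{for small }\varepsilon>0.
\]
This strict inequality restores $(PS)_{c_\lambda}$ via a Brezis--Lieb/profile decomposition analysis, delivering again a nontrivial critical point $w_\lambda\ge 0$ and completing the proof. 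Verifying this energy estimate for the mixed operator $\mathcal L$ under mixed Dirichlet--Neumann data, in particular correctly handling the nonlocal piece near the Neumann portion of the boundary, is the technical heart of the argument.
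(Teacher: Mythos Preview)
Your overall architecture (translate by $u_\lambda$, show $0$ is a local minimum of the shifted functional, then apply mountain pass / Ghoussoub--Preiss) coincides with the paper's. The substantive divergence is in \emph{how} you propose to obtain the local minimum. The paper does not use any $C^1$--versus--$\mathcal X^s(D)$ transfer \`a la Brezis--Nirenberg; instead it follows Alama's variational Perron route: for $\lambda'<\bar\lambda<\Lambda$ one minimizes $J_{\lambda'}$ on the order interval $\mathcal N=\{0\le u\le \bar u\}$ (with $\bar u$ the minimal solution at $\bar\lambda$), and if the constrained minimizer happens to be $u_{\lambda'}$, a direct truncation/energy argument---with the Picone inequality (Theorem~\ref{T2.12}) controlling the sublinear term---shows this constrained minimum is in fact a local minimum in all of $\mathcal X^s(D)$. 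This avoids any appeal to $C^1$ or H\"older regularity of minimizers, which for the present mixed operator with mixed Dirichlet--Neumann data is not available in the paper; your proposed transfer step would need such regularity as an input, so in this setting it is the more fragile choice.

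On the critical exponent $p=\tfrac{n+2}{n-2}$: your plan (Talenti bubbles centered on the Neumann portion, with the Gagliardo piece as a lower-order correction and the interaction with $u_\lambda$ pushing the level below the compactness threshold) is the natural program, and in fact more explicit than what the paper writes---the paper's Section~4.2 only asserts $(PS)_c$ ``since $p<2^*-1$'' and does not carry out the bubble estimate. So your treatment of the critical case is a genuine addition, but be aware that the interaction estimate and the handling of the nonlocal seminorm near $\partial\Omega\cap\overline{\Pi_2}$ are nontrivial and are not supplied by the paper.
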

\numberwithin{equation}{section}
The structure of our article is as follows: Section 2 provides an introduction to the functional framework required to address the problem \eqref{1}. It presents the specific notion of the solution that will be employed and introduces relevant auxiliary results.
{Section 3 focuses on establishing the existence of solutions, strong maximum principles for an auxiliary problem, which are essential for proving our main Theorems. In section 4, we focus on the existence of minimal and extremal solutions for problem \eqref{1} and demonstrate the existence of a second solution by employing Alama's argument. This section highlights the specific approach and methodology used to obtain our main results.

\section{Function spaces}
In this section, we have set our notations and formulated the functional setting for \eqref{1}, which are used throughout the paper.
For every $s\in (0,1)$, we recall the fractional Sobolev spaces as below
$${H^{s}(\mathbb{R}^n)} :=  \left\{ u \in L^{2}(\mathbb{R}^n):~~\frac{|u(x) - u(y)|}{|x - y|^{\frac{n}{2} + s}} \in L^{2}({\mathbb{R}^n}\times {\mathbb{R}^n)} \right\} $$ which is contained in $H^1(\mathbb R^n)$. We assume that $\Omega \cup \Pi_2$ is bounded with a smooth boundary. 
The symbol $D$ is used throughout the article instead of $(\Omega \cup {\Pi_2} \cup (\partial\Omega\cap\overline{\Pi_2}))$ to keep things simple.
  We define the function space  $\mathcal{X}^{1,2}(D)$ as 
\begin{align*}
    \mathcal{X}^{1,2}(D)  = \{u\in H^1(\mathbb{R}^n) : ~u|_{D} \in H^1_0(D) ~\text{and}~ u \equiv 0~ a.e. ~\text{in}~ {D^c}\}, ~\text{where} ~D^c=\Pi_1\cup (\partial \Omega \cap \overline{\Pi_1}),
\end{align*}
 equipped with the following norm
 $$ \eta(u)^2 =  ||\nabla{u}||^2_{L^{2}(\Omega)}+ [u]^2_{s},~~\text{for}~ u\in \mathcal{X}^{1,2}(D), $$
 where  $[u]^2_s$ is the Gagliardo seminorm of $u$ defined by 
 $$[u]^2_{s} = C_{n,s}~ \left(\int_{D_{\Omega}} \frac{|u(x)-u(y)|^{2}}{|x-y|^{n+2s}} \, dxdy \right), ~{D_\Omega =\mathbb{R}^{2n} \backslash (\Omega^{c} \times \Omega^{c})}.$$ 
}
The following {Poincaré inequality} can be established following the arguments of Proposition 2.4 in \cite{MR4065090} and taking advantage of partial Dirichlet boundary conditions in $D^c$.
{
\begin{Proposition}\label{Poin} (Poincar\'e type inequality) There exists a constant $C=C(\Omega, n,s)>0$ such that
$$
\dint_{\Omega}| u|^2\,dx\leq C\left(\int_{\Omega} |\nabla u|^2\,dx+  \int_{D_{\Omega}} \dfrac{|u(x)-u(y)|^2}{|x-y|^{n+2s}}\,dxdy\right),
$$
for every  $u\in\mathcal{X}^{1,2}(D)$, i.e. $\|u\|^2_{L^2(\Omega)}\leq C \eta(u)^2$. 
\end{Proposition}
 It is easy to check that $\eta(.)$ is a norm on ${\mathcal{X}^{1,2}(D)}$, since $\eta(u)$ = 0 implies  $u=0$  a.e. in ~$\mathbb{R}^n$ which follows straightaway from Proposition \ref{Poin}.}

We now describe a few essential properties of this space. 
\begin{Proposition}\label{p2.2}
The space $\left(\mathcal{X}^{1,2}(D), \langle. , .\rangle\right)$ is a Hilbert space with scalar product given by
$$\langle{ u},{ v}\rangle := \int_{\Omega} \nabla u\cdot \nabla{v} \,dx + \frac{C_{n,s}}{2} \int_{D_{\Omega}} {\dfrac{(u(x)-u(y)) (v(x)-v(y))}{|x-y|^{n+2s}}} ~ dx dy. $$
\end{Proposition}
\begin{proof}
    In order to show that ${\mathcal{X}^{1,2}(D)}$ is a Hilbert space, we need to prove that ${\mathcal{X}^{1,2}(D)}$ is complete with respect to the norm $\eta(.)$. For this, let $\{u_j\}_{j\in {\mathbb{N}}}$ be a Cauchy sequence in ${\mathcal{X}^{1,2}(D)}$. By using Proposition \ref{Poin}, we can easily deduce that $\{{u_j\}_{j\in {\mathbb{N}}}}$ is a Cauchy sequence in $L^{2}(\Omega)$ and since it is complete Banach space, there exist{s} a $u\in L^{2}(\Omega)$ such that $u_{j} \to u$  in $L^{2}(\Omega)$ as $j\to \infty$. Hence, up to a subsequence, still denoted by itself, $u_{j} \to u$ a.e. in $\Omega$, for this, we refer [\cite{MR697382}, Theorem IV.9 ]. Clearly, we also have that $\{\nabla u_j\}_j$ is a Cauchy sequence in $L^2(\Omega)$, and hence there exist{s} $w\in L^2(\Omega)$ such that $\nabla u_j \to w$ in $L^2(\Omega)$ as $j \to \infty$.
Now we will show that $\nabla u = w$.
If we fix $\phi \in C^{\infty}_0(\Omega)$, then by the definition of weak derivative one has
\begin{equation}\label{eq2.6}
 \int_{\Omega} \frac{\partial u_j}{\partial x_{i}}\phi~ \mathrm{d}x= -\int_{\Omega}u_j\frac{\partial \phi}{\partial x_{i}}~\mathrm{d}x,~\forall~1\leq i\leq n .    
\end{equation}
Using the fact that strong convergence in $L^{2}(\Omega)$ implies weak convergence in these spaces, we have
\begin{equation}\label{eq2.7}
 \int_{\Omega}u_j\frac{\partial \phi}{\partial x_{i}}~\mathrm{d}x\to \int_{\Omega}u\frac{\partial \phi}{\partial x_{i}}~\mathrm{d}x~\text{and~}\int_{\Omega} \frac{\partial u_j}{\partial x_{i}}\phi~ \mathrm{d}x\to\int_{\Omega} w_i\phi~ \mathrm{d}x~\text{as}~j\to\infty.    
\end{equation}
Letting $j\to\infty$ in \eqref{eq2.6} and using \eqref{eq2.7}, we get
 $$ \int_{\Omega} w_i\phi~ \mathrm{d}x= -\int_{\Omega}u\frac{\partial \phi}{\partial x_{i}}~\mathrm{d}x,~\forall~1\leq i\leq n.$$
 It follows at once that $$\frac{\partial u}{\partial x_{i}}=w_i\in L^{2}(\Omega),~\forall~1\leq i\leq n,~i.e.,~\nabla u=w.$$  Hence, the proof of our claim is finished. Next, we aim to prove that $\mathcal{X}^{1,2}(D)$ is complete. For this, one can notice that $  u_j \to u ~\text{a.e.}~\text{in}~\Omega$ as $j\to\infty$. More precisely, it means that there exists a set $B_1 \subset \Omega$ such that
\begin{equation}\label{D1}
\left|B_1\right|=0 \quad \text { and } \quad u_j(x) \rightarrow u(x) ~\text{as}~j\to\infty\quad \text { for all } x \in \Omega \backslash B_1.
    \end{equation}
Furthermore, given any $\mathcal{H}: \mathbb{R}^n \rightarrow \mathbb{R}$, for any $(x, y) \in \mathbb{R}^{2 n}$, we consider the following function
\begin{equation}\label{G}
G_{\mathcal{H}}(x, y)=\sqrt{C_{n,s}}~\left[\frac{(\mathcal{H}(x)-\mathcal{H}(y)) \mathbf{1}_{D_{\Omega}}(x, y)}{|x-y|^{\frac{n+2 s}{2}}}\right].   
\end{equation}
Now, since
$$
G_{u_j}(x, y)-G_{u_k}(x, y)=\sqrt{C_{n,s}}~\left[\frac{\left(u_j(x)-u_k(y)-u_j(x)+u_k(y)\right) \mathbf{1}_{D_{\Omega}}(x, y)}{|x-y|^{\frac{n+2 s}{2}}}\right]
$$
and $\{u_j\}_j$ is a Cauchy sequence, we have for any $\varepsilon>0$, there exists $n_{\varepsilon} \in \mathbb{N}$ such that, if $j, k \geq n_{\epsilon}$ then
$$
\epsilon^2 \geq  C_{n,s}~ \int_{D_{\Omega}} \frac{\left|\left(u_j-u_k\right)(x)-\left(u_j-u_k\right)(y)\right|^2}{|x-y|^{n+2 s}} d x d y =\left\|G_{u_j}-G_{u_k}\right\|_{L^2\left(\mathbb{R}^{2 n}\right)}^2.
$$
It follows that that $\{G_{u_j}\}_j$ is a Cauchy sequence in $L^2\left(\mathbb{R}^{2 n}\right)$. From this, we infer that there exists a $G\in L^2\left(\mathbb{R}^{2 n}\right)$ such that $G_{u_j}\to G$ in $L^2\left(\mathbb{R}^{2 n}\right)$ as $j\to\infty$. Hence, without loss of generality, we have $G_{u_j}\to G$ a.e. in $\mathbb{R}^{2 n}$ as $j\to\infty$. It means that we can find $B_2 \subset \mathbb{R}^{2 n}$ such that
\begin{equation}\label{D2}
\left|B_2\right|=0 ~\text{and}~~
 G_{u_j}(x, y) \rightarrow G(x, y)~\text{as} ~j\to\infty,~\forall~(x, y) \in \mathbb{R}^{2 n} \backslash B_2.
    \end{equation}
 For any $x \in \Omega$, we define the following sets
 $$  M_x =\left\{y \in \mathbb{R}^n:(x, y) \in \mathbb{R}^{2 n} \backslash B_2\right\},~~~P  =\left\{x \in \Omega:\left|\mathbb{R}^n \backslash M_x\right|=0\right\} $$
and $$ N =\left\{(x, y) \in \mathbb{R}^{2 n}: x \in \Omega \text { and } y \in \mathbb{R}^n \backslash M_x\right\}.$$
Our next goal is to show 
\begin{equation}\label{N}
N \subseteq B_2
    \end{equation}
Indeed, if $(x, y) \in N$, then $y \in \mathbb{R}^n \backslash M_x$, namely $(x, y) \notin \mathbb{R}^{2 n} \backslash B_2$ and hence $(x, y) \in B_2$, as desired \eqref{N}. In addition, by \eqref{D2} and \eqref{N}, we find that $|N|=0$.
Hence, by Fubini's Theorem, it follows that
$
0=|N|=\int_{\Omega}\left|\mathbb{R}^n \backslash M_x\right| d x
$
and thus $\left|\mathbb{R}^n \backslash M_x\right|=0$ for a.e. $x \in \Omega$. Also, we have $|\Omega \backslash P|=0$ which, together with \eqref{D1}, gives
$$
\left|\Omega \backslash\left(P \backslash B_1\right)\right|=\left|(\Omega \backslash P) \cup B_1\right| \leqslant|\Omega \backslash P|+\left|B_1\right|=0 .
$$
In particular, we infer that $P \backslash B_1$ is non empty.
Let us fix $x_0 \in P \backslash B_1$. Now, since $x_0 \in \Omega \backslash B_1$, we have
$$
\lim _{j \rightarrow+\infty} u_j\left(x_0\right)=u\left(x_0\right)
$$
by \eqref{D1}. Moreover, $\left|\mathbb{R}^n \backslash M_{x_0}\right|=0$,  since $x_0 \in P$, namely for any $y \in M_{x_0}$, it follows that $\left(x_0, y\right) \in \mathbb{R}^{2 n} \backslash B_2$. Hence, by using \eqref{G} and \eqref{D2}, we obtain that
$$
\lim _{j \rightarrow+\infty} G_{u_j}\left(x_0, y\right)=\sqrt{C_{n,s}}~\left|x_0-y\right|^{\frac{-(n+2 s)}{2}} \lim _{j\rightarrow+\infty}\left(u_j\left(x_0\right)-u_j(y)\right) \mathbf{1}_{D_{\Omega}}\left(x_0, y\right)=G\left(x_0, y\right).
$$
In addition, since $\Omega \times\left(\mathbb{R}^N \backslash \Omega\right) \subseteq D_{\Omega}$, by the definition in \eqref{G},
$$
G_{u_j}\left(x_0, y\right)=\sqrt{C_{n,s}}\left[\frac{u_j\left(x_0\right)-u_j(y)}{\left|x_0-y\right|^{\frac{n+2s}{2}}}\right] \quad \text { for a.e. } y \in \mathbb{R}^n \backslash \Omega.
$$
Hence, we have
$$
\begin{aligned}
\lim _{j \rightarrow+\infty} u_j(y) & =\lim _{j \rightarrow+\infty}\left(u_j\left(x_0\right)-\sqrt{\frac{1}{C_{n,s}}}~~\left|x_0-y\right|^{\frac{n+2s}{2}} G_{u_j}\left(x_0, y\right)\right) \\
& =u\left(x_0\right)-\sqrt{\frac{1}{C_{n,s}}}~~\left|x_0-y\right|^{\frac{n+2s}{2}} G\left(x_0, y\right)~\text{a.e.}~ y\in \R^n\setminus \Omega.
\end{aligned}
$$
This implies that $u_j\to u$ a.e. in $ \mathbb{R}^n \backslash \Omega$ as $j\to\infty$. This and  \eqref{D1} say that $u_j$ converges a.e. in $\mathbb{R}^n.$
% Now, recalling that $u_j$ is a Cauchy sequence in ${\mathcal{X}^{1,2}(D)}$, fixed any $\epsilon>0$, there exists $n_{\epsilon} \in \mathbb{N}$ such that, for any $j \geqslant n_{\epsilon}$. Further, notice that $[u_i-u_j]^2_s\leq \epsilon, ~~ \forall~ i,j \geq n_{\epsilon}$.
Up to the change of notation, we will say that $u_j$ converges a.e. in $\R^n$ to some $u.$
Consequently, by the Fatou's {L}emma, we obtain
\begin{align*}
C_{n,s}~\int_{D_{\Omega}} \frac{|u(x)-u(y)|^2}{|x-y|^{n+2s}}
dxdy &\leq C_{n,s}~\liminf_{j\to \infty}\int_{D_{\Omega}} \frac{|u{_{{j}}}(x)-u_{{j}}(y)|^2}{|x-y|^{n+2s}}
dxdy\\
&\leq C_{n,s}~ \liminf_{j\to \infty}\int_{D_{\Omega}} \frac{|u{_{{j}}}(x)-u_{{j}}(y)|^2}{|x-y|^{n+2s}}
\,dxdy + \liminf_{j\to \infty}\int_{{\Omega}} |\nabla u{_{{j}}}|^2 dxdy\\
& =\liminf_{j\to \infty}\eta(u{_{{j}}})^2<+ \infty.
\end{align*}
Hence, we deduce that $[u]^2_s <+\infty$. Now it remains to show that $\eta(u_j) \to \eta(u)$ as $j \to \infty.$
  For this, let us take $ i\geq n_{\epsilon}$, then by using Fatou's
{L}emma, we get
\begin{align*}
[u_i-u]^2_s \leq \liminf_{j \to \infty}[u_i-u{_{{j}}}]^2_s \leq \liminf_{j \to \infty}[u_i-u{_{{j}}}]^2_s + \liminf_{j \to \infty} \|\nabla u_i - \nabla u{_{{j}}}\|^2_{L^2(\Omega)}\leq \liminf_{j\to \infty}\eta(u_i-u{_{{j}}})^2\leq  \epsilon.
\end{align*}
Hence, $u_i \to u \in {\mathcal{X}^{1,2}(D)}$ as $i \to \infty$, which completes the proof.
\end{proof}
\begin{Proposition}\label{P}
Let s~$\in (0,1)$ then for every $ u,v\in {C^\infty_0(D)}$, it holds
\begin{align*}
    \int_{\Omega}v \mathcal{L} u \,dx  
    &= \int_{{\Omega}} \nabla u \cdot\nabla{v} \,dx +  \frac{C_{n,s}}{2}\int_{D_{\Omega}} {\dfrac{(u(x)-u(y)) (v(x)-v(y))}{|x-y|^{n+2s}}} ~ dx dy\\
    &-  \int_{\partial \Omega\cap\overline{\Pi_2}} v {\frac{\partial u}{\partial \nu}}~ d{\sigma}-  \int_{\Pi_2} v {\mathcal{N}}_s u~ dx.
    \end{align*}
    \end{Proposition}
    \begin{proof}
        By directly using the integration by parts formula and the fact that $u,v \equiv 0$ a.e. in ${\Pi_1}\cup (\partial \Omega \cap \overline{\Pi_1})$, we can follow {L}emma 3.3 of \cite{MR3651008} to obtain the conclusion.
    \end{proof}
{
     \begin{Corollary}
Since $ C^\infty_0(D)$ is dense in $\mathcal{X}^{1,2}(D)$, so Proposition \ref{P} still holds for functions in $\mathcal{X}^{1,2}(D)$.   
\end{Corollary}
}
\begin{Definition}\label{new5}
We say that $u\in \mathcal{X}^{1,2}(D)$ is a weak solution to the problem \eqref{1} if 
\begin{equation}\label{dd2.2}
   \int_{{\Omega}} \nabla u\cdot\nabla \varphi \,dx +\frac{C_{n,s}}{2}\int_{D_{\Omega}} \frac{(u(x)-u(y))(\varphi(x)-\varphi(y))}{|x-y|^{n+2s}} dxdy  = {\lambda}\int_{\Omega} u^q\varphi\,dx+\int_{\Omega} u^p\varphi \,dx~~\forall  ~\varphi \in {\mathcal{X}^{1,2}(D)}\\
\end{equation}
\end{Definition}
  
  {For functions in $\mathcal{X}^{1,2}(D)$, we now present an embedding result which is a consequence of $\mathcal{X}^{1,2}(D)\hookrightarrow H^1(\mathbb R^n)$ and Sobolev embedding of $H^1(\mathbb R^n)$. 
\begin{remark}\label{l1}
   For $D$ is bounded (since $\Omega\cup\Pi_2$ is bounded) with smooth boundary, we have 
    $$\mathcal{X}^{1,2}(D)\hookrightarrow \hookrightarrow L^r_{loc}(\R^n)$$
    compact embedding, for $r\in [1,2^*)$ and continuous embedding for $r\in [1, 2^*],$ where $2^*=\frac{2n}{n-2}$.
\end{remark}} 
% \begin{lemma} \label{l1}
% There exists a constant $S>0$ such that 
%  $$\|u\|^2_{L^q(\mathbb{R}^n)}\leq S \eta(u)^2,~~ q\in \{2^*_s, 2^*\} $$
%  for $u \in {\mathcal{X}^{1,2}(D)}$. In other words,
%  ${\mathcal{X}}^{s}(D) \hookrightarrow {{L^{2^*_{s}}}(\mathbb{R}^{n})} \cap {{L^{2^*}(\mathbb{R}^{n})}}.$
%  \end{lemma}
% \begin{proof}
%  Since, $u\in {\mathcal{X}^{1,2}(D)} $ implies that $u\in H^{s}({\mathbb{R}^n})$, from equation \eqref{e2.4} we  have
%  \begin{align*}
%      \|u\|^2_{L^{2^*}(\mathbb{R}^n)}
%      &\leq S \|\nabla u\|^2_{L^{2}(\mathbb{R}^n)}\leq S(\|\nabla u\|^2_{L^{2}(\mathbb{R}^n)} +[u]^2_s)= \eta(u)^2.
%  \end{align*}
%  By using equation \eqref{e2.7}, we have 
%  $$
%  \|u\|^2_{{L^{2^*_{s}}}(\mathbb{R}^{n})} \leq C_{n,s}  \eta(u)^2,~\forall~ u\in \mathcal{X}^s(\mathbb{R}^n).$$ 
%  Combining the above inequalities, we conclude that the result holds.
% \end{proof}
Now, for $a\in \mathbb R$, we consider the standard truncation functions given by
\begin{equation}\label{new0}
    M_a(u)= \max \{-a, \min\{a,u\}\},~~ K_a(u)= u- M_a(u).
\end{equation} 
 The following properties of space ${\mathcal{X}^{1,2}(D)}$ shall be helpful to get regularity results for elliptic problems in space ${\mathcal{X}^{1,2}(D)}$.

\begin{Proposition} \label{p2.5}
{Assume $u\in {\mathcal{X}^{1,2}(D)}$, then the following holds.} 
\begin{enumerate}
    \item  Let $\phi$ be  Lipschitz in $\mathbb{R}$ such that $\phi(0)=0$, then $\phi(u)
\in  {\mathcal{X}^{1,2}(D)}$. 
In particular for any $a>0$, $M_a(u),\\ K_a(u) \in {\mathcal{X}^{1,2}(D)}$.
\item For any $a\geq 0$,
{$$ \eta(K_a(u))^2\leq \int_{\Omega} K_a(u) \mathcal{L} u\, dx + \int_{\overline{\Pi_2}\cap \partial \Omega} K_a(u) \frac{\partial u}{\partial \nu}\,d{\sigma}+\int_{\Pi_2}K_a(u) \mathcal{N}_s(u) \,dx,$$}
{\item For any $a\geq 0$,
{$$ \eta(M_a(u))^2\leq \int_{\Omega} M_a(u) \mathcal{L} u\, dx + \int_{\overline{\Pi_2}\cap \partial \Omega} M_a(u) \frac{\partial u}{\partial \nu}\,d{\sigma}+\int_{\Pi_2}M_a(u) \mathcal{N}_s(u) \,dx.$$
}}
\end{enumerate}
\end{Proposition}
\begin{proof} We can easily prove $(1)$ with the help of arguments in {P}roposition 3 in \cite{MR3393266}. For proving $(2)$ and $(3)$, we claim that, for any arbitrary $b, d \geq 0$ and for any $x \in \mathbb{R}^n,$
\begin{equation} \label{P1}
 b({K_a(u)}\mathcal{L}(M_a(u)))(x)+d \left( {K_a(u)} \frac{\partial (M_a(u))}{\partial \nu}+ {K_a(u)}\mathcal{N}_s(M_a(u))\right)(x) \geq 0.
 \end{equation}
We can check that if $x$ is such that $ K_a(u)(x) =0$ then equation \eqref{P1} is obvious. So, let  $x$ is such that $ K_a(u)(x) \neq 0$ then either $K_a(u)(x)> 0$ or $K_a(u)(x)< 0$. { If $ K_a(u)(x)> 0$ then $M_a(u)(x)= a$} which is maximum value that $M_a(u)$ attains and therefore  $\mathcal{L}(M_a(u))(x)\geq 0$, $\frac{\partial (M_a(u))}{\partial \nu}(x)\geq 0$ and $\mathcal{N}_s(M_a(u))(x) \geq 0.$ When $ K_a(u)(x)<0$, we have that $M_a(u)(x)= -a$ which is minimum value that $M_a(u)$ attains and therefore  $\mathcal{L}(M_a(u))(x)\leq 0$, $\frac{\partial (M_a(u))}{\partial \nu}(x)\leq 0$ and $\mathcal{N}_s(M_a(u))(x) \leq 0.$ So we can easily conclude the claim \eqref{P1}. By using equation \eqref{P1} and {P}roposition \ref{P}, it follows that
\begin{equation} \label{2.3}
 \begin{split}
\int_{\Omega} M_a(u) \mathcal{L}(K_a(u))dx + \int_{\overline{\Pi_2}\cap\partial\Omega} M_a(u)\frac{\partial (K_a(u))}{\partial \nu}d{\sigma}+\int_{\Pi_2} M_a(u)\mathcal{N}_s (K_a(u))dx\\
= \int_{\Omega} K_a(u) \mathcal{L}(M_a(u))dx + \int_{\overline{\Pi_2}\cap\partial\Omega} K_a(u) \frac{\partial (M_a(u))}{\partial \nu}d{\sigma}+\int_{\Pi_2} K_a(u) \mathcal{N}_s(M_a(u))dx\geq 0.
\end{split}   
\end{equation} 
Now, by using \eqref{P1}  condition and  {P}roposition \ref{p2.2} and Proposition \ref{P}, we obtain 
\begin{equation} \label{2.4}
\begin{split}
\eta(K_a(u))^2 &=    \int_{\Omega} |\nabla K_a(u)|^2 dx +{\frac{C_{n,s}}{2}}\int_{D_{\Omega}} \frac{(K_a(u)(x)-K_a(u)(y))^2}{|x-y|^{n+2s}} dxdy\\
&= \int_{\Omega}K_a(u) \mathcal{L}(K_a(u))dx + \int_{\overline{\Pi_2}\cap\partial\Omega}K_a(u)\frac{\partial (K_a(u))}{\partial \nu} \,d{\sigma}+ \int_{\Pi_2} K_a(u)\mathcal{N}_s K_a(u)dx\\
&=\int_{\Omega} K_a(u) \mathcal{L}(u-M_a(u))dx + \int_{\overline{\Pi_2}\cap\partial\Omega} K_a(u)\frac{\partial (u-M_a(u))}{\partial \nu} \,d{\sigma}+\int_{\Pi_2} K_a(u)\mathcal{N}_s(u-M_a(u))dx 
\end{split}
    \end{equation}
    Similarly, we can obtain
    \begin{equation} \label{2.5}
    \begin{split}
 \eta(M_a(u))^2 
 &= \int_{\Omega} M_a(u) \mathcal{L}(u- K_a(u))dx + \int_{\overline{\Pi_2}\cap\partial\Omega}M_a(u)\frac{\partial (u- K_a(u))}{\partial \nu} \,d{\sigma}+ \int_{\Pi_2} M_a(u)\mathcal{N}_s(u- K_a(u))dx.
 \end{split}
\end{equation}
Hence, the claim in $(2)$ follows from equations \eqref{2.4} and  \eqref{2.3} as well as the claim $(3)$ follows from \eqref{2.5} and \eqref{2.3}.
\end{proof}

\section{Auxiliary problem}
In this section, we study an elementary  problem given by
\begin{equation} \label{2.6}
    \left\{\begin{split} \mathcal{L}u\: &= g~~ \text{in} ~\Omega, \\
      u&=0~~\text{in} ~~{D^c},\\
 \mathcal{N}_s(u)&=0 ~~\text{in} ~~{\Pi_2}, \\
 \frac{\partial u}{\partial \nu}&=0 ~~\text{in}~~ \partial \Omega \cap \overline{\Pi_2}.
    \end{split} \right.
\end{equation}
where $\Omega$ is a open domain of $\mathbb{R}^n$, $g\in (\mathcal{X}^{1,2}(D))^*$ and $(\mathcal{X}^{1,2}(D))^*$ is the dual space of ${\mathcal{X}^{1,2}(D)}$.
\begin{definition}
We say that $u\in {\mathcal{X}^{1,2}(D)}$ is a weak solution to the problem \eqref{2.6} if 
\begin{equation} \label{2.7}
    \int_{\Omega} \nabla u\cdot\nabla \varphi ~dx + \frac{C_{n,s}}{2}\int_{D_{\Omega}} \frac{(u(x)-u(y))(\varphi(x)-\varphi(y))}{|x-y|^{n+2s}} dxdy  = \int_\Omega g\varphi~dx, ~ \forall~\varphi \in {\mathcal{X}^{1,2}(D)}.
\end{equation}
\end{definition}
\subsection{Case: \texorpdfstring{$g=g(x)$}{}} 
It is to be noted that the Lax-Milgram Theorem implies the existence and uniqueness of energy solutions to problem \eqref{2.6}, see Theorem 1.1 in \cite{MR4387204}. Additionally, if $g \geq 0$ a.e in $\Omega$, then $u \geq 0$ a.e in $\R^n$. Indeed, for a weak solution $u \in {\mathcal{X}^{1,2}(D)}$ of \eqref{2.6}, we know that $u^{-} = \max(- u, 0) \in {\mathcal{X}^{1,2}(D)}$ and using $u^{-}$ as a test function in equation \eqref{2.7} then from {P}roposition \ref{p2.5}, we easily get $u^{-} = 0$ a.e. in $\mathbb R^{n}$.
\begin{definition}
    For each non-negative test function in ${\mathcal{X}^{1,2}(D)}$, a super solution (or subsolution) of \eqref{2.6} is a function in ${\mathcal{X}^{1,2}(D)}$ which satisfies \eqref{2.7} with equality substituted by $"\geq"$ (respectively $"\leq"$), for each non-negative test function in ${\mathcal{X}^{1,2}(D)}$.
\end{definition} 
\begin{lemma}\label{l2.7}
Suppose that problem \eqref{2.6} has a subsolution $\underline{z}$ and a supersolution $\bar{z}$, satisfying $\underline{z}\leq \bar{z}$ a.e. in $\R^n$. Then there exists a weak solution $z$  of \eqref{2.6} satisfying $\underline{z}\leq z \leq \bar{z}$ a.e. in $\R^n$.
\end{lemma}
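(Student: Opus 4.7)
The plan is to invoke the Lax--Milgram theorem to produce the weak solution $z$ of \eqref{2.6}, and then extract the two sandwiching bounds $\underline{z}\leq z\leq\bar z$ by symmetric positive-part truncation arguments. Since $g\in\mathcal{X}^{-s}(D)$ is fixed and the bilinear form appearing on the left of \eqref{2.7} coincides with the scalar product on the Hilbert space $\mathcal{X}^s(D)$ (Proposition \ref{p2.2}), it is automatically continuous and coercive, so Lax--Milgram (as already referenced just before the statement, via Theorem 1.1 of \cite{MR4387204}) yields a unique $z\in\mathcal{X}^s(D)$ solving \eqref{2.7}. Existence is therefore immediate; the substance of the lemma lies in the comparison bounds.

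For the lower bound I would set $w:=(\underline{z}-z)_+=\max\{\underline{z}-z,\,0\}$. Applying Proposition \ref{p2.5}(1) to the Lipschitz function $\phi(t)=t_+$, which satisfies $\phi(0)=0$, gives $w\in\mathcal{X}^s(D)$, and $w\geq 0$ by construction. Subtracting the identity \eqref{2.7} for $z$ from the sub-solution inequality for $\underline{z}$, both tested against the non-negative function $w$, yields
\begin{equation*}
\frac{C_{n,s}}{2}\int_{D_{\Omega}}\frac{\bigl((\underline z-z)(x)-(\underline z-z)(y)\bigr)\bigl(w(x)-w(y)\bigr)}{|x-y|^{n+2s}}\,dxdy+\int_{\mathbb{R}^n}\nabla(\underline z-z)\cdot\nabla w\,dx\leq 0.
\end{equation*}
Now I would invoke two elementary pointwise facts: $\nabla(\underline z-z)\cdot\nabla w=|\nabla w|^2$ a.e.\ (since $\nabla w=\nabla(\underline z-z)\chi_{\{\underline z>z\}}$), and $(a-b)(a_+-b_+)\geq (a_+-b_+)^2$ for all $a,b\in\mathbb R$. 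Together these bound the left-hand side from below by $\eta(w)^2$, forcing $\eta(w)=0$ and, via Lemma \ref{l1}, $w\equiv 0$ a.e. Hence $\underline{z}\leq z$ almost everywhere.

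The upper bound $z\leq \bar z$ is obtained in a perfectly symmetric way: test the reversed comparison (super-solution inequality for $\bar z$ minus the equation for $z$) against the non-negative function $(z-\bar z)_+\in\mathcal{X}^s(D)$ and apply the same two pointwise inequalities to conclude $\eta((z-\bar z)_+)=0$.

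The only non-routine point is justifying that the positive-part truncations are admissible test functions in $\mathcal{X}^s(D)$ together with the pointwise inequality controlling the fractional quadratic form; the former is precisely Proposition \ref{p2.5}(1), and the latter is the standard elementary convexity estimate for $t\mapsto t_+$. Everything else reduces to a direct algebraic manipulation of \eqref{2.7} and of the sub/super-solution inequalities.
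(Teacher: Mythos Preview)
Your argument is correct for the lemma as stated: with $g\in\mathcal{X}^{-s}(D)$ fixed, the problem is linear, Lax--Milgram gives the unique $z$, and your truncation tests cleanly deliver $\underline z\le z\le \bar z$. The pointwise inequalities you use are valid and Proposition~\ref{p2.5}(1) does guarantee the test functions lie in $\mathcal{X}^s(D)$.

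The paper, however, does not argue this way. It points to a \emph{monotone iteration} (sub-/super\-solution) scheme in the style of \cite{MR1964476} and \cite{garain2023class}: set $u_0=\underline z$, solve linear problems $\mathcal L u_{k}= g$ (or $g(x,u_{k-1})$ in the semilinear situation) successively, show $\underline z\le u_{k}\nearrow$ and $u_k\le \bar z$ by the maximum principle, and pass to the limit. For a genuinely fixed right-hand side this collapses after one step and is equivalent to what you wrote; the reason the authors phrase it as an iteration is that they later invoke Lemma~\ref{l2.7} for the \emph{nonlinear} problem $(P_\mu)$ (see Lemma~\ref{ll4.2} and the construction in Lemma~\ref{ll3.2}), where the iterative mechanism is essential and your one-shot Lax--Milgram step is no longer available. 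So your route is tighter for the literal linear statement, while the paper's route is the one that actually carries the applications made of the lemma elsewhere in the paper.
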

\begin{proof}
    This result can be established using a standard iterative argument and precisely following   {L}emma 2.2 in \cite{MR1964476}.
\end{proof}
The following regularity result is a straightforward application of Proposition \ref{p2.5}.
\begin{lemma} \label{l2.8}
Let $u$ be a solution of the problem \eqref{2.6} and $g$ be any function defined as in \eqref{2.6} satisfying  $ g \in L^m({{D}})$ with $m>\frac{n}{2s}\left(>\frac{n}{2}\right)$, then $u \in L^{\infty}({D}).$
\end{lemma}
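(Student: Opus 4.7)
My plan is to run a classical Stampacchia / De Giorgi truncation argument, exploiting the two ingredients already set up in the paper: the Sobolev embedding $\mathcal{X}^s(D)\hookrightarrow L^{2^*}(\mathbb{R}^n)$ from Lemma \ref{l1}, and the truncation calculus for $K_a(u)$ from Proposition \ref{p2.5}. Concretely, for $a>0$ I would plug $\varphi=K_a(u)\in\mathcal{X}^s(D)$ into the weak formulation \eqref{2.7}. Since $u=M_a(u)+K_a(u)$ and both $M_a$ and $K_a$ are non-decreasing in $u$, the cross terms $(M_a(u(x))-M_a(u(y)))(K_a(u(x))-K_a(u(y)))$ and $\nabla M_a(u)\cdot\nabla K_a(u)$ are pointwise non-negative (the latter vanishes a.e.\ because the supports of $\nabla M_a(u)$ and $\nabla K_a(u)$ are disjoint). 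This gives the key inequality
\[
\eta(K_a(u))^2 \le \int_{\Omega} g\, K_a(u)\,dx.
\]

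Next I would combine this with Lemma \ref{l1} and Hölder's inequality. Setting $A_a=\{x\in\Omega:|u(x)|>a\}$, since $K_a(u)$ is supported in $A_a$ I get
\[
\|K_a(u)\|_{L^{2^*}(\Omega)}^2 \le S\,\eta(K_a(u))^2 \le S\|g\|_{L^m(\Omega)}\|K_a(u)\|_{L^{m'}(A_a)}
\le S\|g\|_{L^m(\Omega)}\|K_a(u)\|_{L^{2^*}(\Omega)}\,|A_a|^{\frac{1}{m'}-\frac{1}{2^*}},
\]
where $m'=m/(m-1)$, giving $\|K_a(u)\|_{L^{2^*}(\Omega)}\le C\|g\|_{L^m}|A_a|^{\frac{1}{m'}-\frac{1}{2^*}}$. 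Then for any $k>a$, the inclusion $A_k\subset A_a$ together with $|K_a(u)|\ge k-a$ on $A_k$ yields
\[
(k-a)^{2^*}|A_k| \le \int_\Omega |K_a(u)|^{2^*}\,dx \le \bigl(C\|g\|_{L^m}\bigr)^{2^*}|A_a|^{\,2^*\left(\frac{1}{m'}-\frac{1}{2^*}\right)}
= C'\,|A_a|^{\alpha},
\]
with exponent $\alpha=\tfrac{2^*}{m'}-1$.

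Finally I would invoke the Stampacchia lemma: if $\phi(k):=|A_k|$ satisfies $\phi(k)\le C'(k-a)^{-2^*}\phi(a)^\alpha$ for all $k>a>0$ with $\alpha>1$, then $\phi$ vanishes for $k$ large enough, i.e.\ $u\in L^\infty(\Omega)$. The condition $\alpha>1$ is equivalent to $m>\tfrac{n}{2}$, which holds by hypothesis since $m>\tfrac{n}{2s}>\tfrac{n}{2}$ for $s\in(0,1)$. The one subtle point, and the only real obstacle, is justifying the chain of Sobolev--Hölder estimates globally: the embedding in Lemma \ref{l1} is on $\mathbb{R}^n$ and the truncation $K_a(u)$ lives on all of $\mathbb{R}^n$, but because we test \eqref{2.7} with $K_a(u)$ and integrate $g\,K_a(u)$ only on $\Omega$, only the restriction of $K_a(u)$ to $\Omega$ enters the right-hand side, which is exactly what the Stampacchia iteration needs. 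Hence the hypothesis $m>n/(2s)$ (stronger than what the argument strictly requires for the $L^{2^*}$-based iteration) comfortably delivers $u\in L^\infty(\Omega)$. The bound $\tfrac{n}{2s}$ appears naturally if one prefers to run the same iteration using the fractional Sobolev exponent $2^*_s$ in Lemma \ref{l1} instead of $2^*$; both routes work and yield the same conclusion.
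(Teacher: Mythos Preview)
Your proposal is correct and follows essentially the same Stampacchia truncation argument as the paper: test with $K_a(u)$, use the non-negativity of the cross term (which the paper packages as Proposition~\ref{p2.5}(2)) to obtain $\eta(K_a(u))^2\le\int_{A(a)}gK_a(u)\,dx$, then apply the Sobolev embedding of Lemma~\ref{l1} with H\"older, derive the level-set recursion $|A_k|\le C(k-a)^{-2^*}|A_a|^{\alpha}$, and conclude via the Stampacchia lemma once $\alpha=2^*(1-\tfrac{1}{m}-\tfrac{1}{2^*})>1$, i.e.\ $m>n/2$. Your observation that the stated threshold $m>n/(2s)$ corresponds to running the iteration with $2^*_s$ rather than $2^*$ matches the paper's setup as well.
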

\begin{proof}
We borrow some arguments, from \cite{MR3393266}  to establish the proof.  Let $a>0$ be any arbitrary real number. Now, taking $\varphi=K_a(u)$ as a test function in \eqref{2.6}, where $K_a(u)
$ is defined in \eqref{new0}, we obtain 
 % $A(a) = \{ x\in \mathbb{R}^n : u(x)>a\}$ 
 % then using {P}roposition \ref{p2.5}, we can write
 $$\int_{\Omega} \nabla u\cdot\nabla K_a(u(x)) dx +\int_{D_{\Omega}} \frac{(u(x)-u(y))(K_a(u(x))-K_a(u(y)))}{|x-y|^{n+2s}} dxdy = \int_{\Omega}g K_a(u(x))\,dx.$$
 On the other hand, we have {$u(x)=K_a(u(x))+M_a(u(x))$}, then using Proposition \ref{p2.5}(2), we can write
\begin{align*}
\eta(K_a(u(x)))^2 & \leq \int_{\Omega}g K_a(u(x)) ~dx+ \int_{\overline{\Pi_2}\cap\partial\Omega} K_a(u(x)) \frac{\partial u}{\partial \nu} \,d{\sigma} + \int_{\Pi_2}  K_a(u(x))\mathcal{N}_s u \,dx\\
& \leq \int_{D}{\left|g  K_a(u(x)) \right|}~dx+ \int_{\overline{\Pi_2}\cap\partial\Omega} K_a(u(x)) \frac{\partial u}{\partial \nu} \,d{\sigma} + \int_{\Pi_2}  K_a(u(x))\mathcal{N}_s u \,dx\\
&  =\int_{A(a)} {\left|g K_a(u(x))\right|}~ dx +  \int_{\overline{\Pi_2}\cap\partial\Omega}  K_a(u(x)) \frac{\partial u}{\partial \nu} \,d{\sigma}+\int_{\Pi_2}  K_a(u(x)) \mathcal{N}_s u ~ dx,
\end{align*} 
where $A(a) = \{ x\in {D} : |u(x)|\geq a\}$ and also we used the fact that $K_a(u)=0$ on $A(a)^c$. 
Since, $u$ satisfies the mixed boundary conditions given in \eqref{2.6}, then we obtain
$$\eta( K_a(u))^2\leq \int_{A(a)} |g K_a(u(x)) |~dx$$
Now, using Remark \ref{l1} and H\"older inequality, we obtain
$$ \|K_a(u)\|^2_{L^{{2}^*}({D})}\leq S \eta(K_a(u))^2\leq S\|g\|_{L^m({D})}\|K_a(u)\|_{L^{{2}^*}({D})} |A(a)|^{\left(1-\frac{1}{2^*}-\frac{1}{m}\right)},$$
for some $m> 1$ and $S>0$. After simplifying this, we obtain 
$$ \|K_a(u)\|_{L^{{2}^*}({D})}\leq S \|g\|_{L^m({D})} |A(a)|^{\left(1-\frac{1}{2^*}-\frac{1}{m}\right)}.$$
Now, it is easy to check that when $a < h$ then $A(h)\subset A(a)$ and $K_a(u)\mathbf{1}_{A(h)}\geq (h-a)$, where $\mathbf{1}_{A(h)}$ is characteristic function on $A(h)$. Then we have
$$(h-a)|A(h)|^{{\frac{1}{2^*}}}\leq S\|g\|_{L^m({D})} |A(a)|^{\left(1-\frac{1}{2^*}-\frac{1}{m}\right)}$$ which implies
$$|A(h)|\leq S^{2^*}\frac{\|g\|^{2^{*}}_{L^m({D})}|A(a)|^{2^*\left(1-\frac{1}{2^*}-\frac{1}{m}\right)}}{(h-a)^{2^*}}$$
Let us choose $m>\frac{n}{2s}>\frac{n}{2}$, then we have that
$$2^*\left(1-\frac{1}{2^*}-\frac{1}{m}\right)>1.$$
Hence, if we apply {L}emma 14 in \cite{MR3393266} with the choice of $\psi(a_0)=|A(a_0)|$ then we obtain $\psi(a_0)\equiv 0$,  where {$a_0^{2^*}= S^{2^*}\frac{\|g\|^{2^{*}}_{L^m({D})}|A(a)|^{2^*\left(1-\frac{1}{2^*}-\frac{1}{m}-1\right)}}{(h-a)^{2^*}} 2^{\frac{\delta \alpha}{\alpha-1}}$ and $\delta={2^*\left(1-\frac{1}{2^*}-\frac{1}{m}-1\right)} $, $\alpha=2^*$}. This implies $|A(a_0)|=0$ that is ${|u|}\leq a_0$ a.e. in ${D}$.  
Thus, $u\in {L^{\infty}({D})}$.
\end{proof}
\begin{theorem}\label{W2p}
Suppose $g \in L^{\infty}(D)$ and  $\partial D$ is of class $C^{1,1}$.  Also let  $u \in \mathcal{X}^{1,2}(D)$ be weak solution of \eqref{2.6} then
$ u \in C^{1, \gamma}(\bar\Omega)$, for some $\gamma \in(0,1)$. Moreover, $u\in C^{0,\beta}(\R^n)$, for some $\beta\in (0,1).$
    \end{theorem}
\begin{proof}
It is easy to see that if $g\in L^{\infty}(D)$ then $g \in L^p(D)$, $\forall$~ $p \geq 2$, since $D$ is bounded. We utilize  [Theorem 3.1.22 in \cite{garroni2002second}] to achieve $W^{2, p}$ regularity for some $p > 1$ i.e. $\|u\|_{W^{2,p}(\Omega)}\leq C_1\|g\|_{L^p(\Omega)}$ and combining this with  compact embedding of $W^{2, p}(\Omega)$ in $C^{1,\gamma}(\bar \Omega)$ for $p > n$, we get $ u \in C^{1, \gamma}(\bar\Omega)$. Furthermore, from $\mathcal{N}_s u=0$ in $\Pi_2$,  we have
$$ u(x)\int_{\Omega} \frac{dy}{|x-y|^{n+2s}}= \int_{\Omega} \frac{u(y) dy}{|x-y|^{n+2s}}.
$$
 In the above equality, we observe that both parametrized integrals with respect to $x$ are differentiable in $\Pi_2$, since for any $x\in \Pi_2$, $\text{dist} (x,\Omega)>0$. This implies $u$ is differentiable in  $\Pi_2.$ Next, from this, $u \in C^{1, \gamma}(\bar\Omega)$  and $u=0$ in $\Pi_1$, we may obtain that $u \in C^{0,\beta}(\mathbb{R}^n).$
\end{proof}
% ___________________________________________________________

\subsection{ Case: \texorpdfstring{$g=g(x,t)$}{}}
 
% \begin{equation} \label{2.10}
% |g(x,u)|\leq c(1+|u|^p), \text{where} ~2\leq p\leq \frac{n+2}{n-2}=2^*-1.
% \end{equation}
Let us now state the  following conditions which shall be called periodically in subsequent results of this subsection:
\begin{enumerate}
\item $g: \Omega \times [0,\infty)\to \R$ is a Carath\'eodory function.
\item $g(\cdot,t)\in L^{\infty}(\Omega)$, for every $t\geq0.$
\item $|g(x,t)|\leq c(1+|t|^{p}), ~\forall~t\geq 0,  \text{where} ~1< p\leq \frac{n+2}{n-2}=2^*-1$.
\item The function $t\mapsto \frac{g(x,t)}{t}$ is strictly decreasing in $(0,\infty).$
\end{enumerate}

We can establish the following {T}heorem as an extension of {L}emma \ref{l2.8} using [Theorem 4.1 in \cite{biagi2021brezis}].
\begin{theorem}\label{t2.10}
Assume $u$ to be {a} weak solution to \eqref{2.6} with g satisfying the growth condition $(3)$, then $u \in L^{\infty}({{D}})$.
\end{theorem}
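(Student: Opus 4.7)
The plan is to run a Brezis--Kato / Moser iteration adapted to the mixed operator and mixed boundary conditions, using Proposition \ref{p2.5}(2) as the replacement for the classical Caccioppoli inequality. First I would rewrite the nonlinearity as $g(x,u)=V(x)u+f(x)$ with $|V(x)|\le C(1+|u(x)|^{p-1})$; the critical growth exponent $p-1\le 2^*-2=\frac{4}{n-2}$ together with Lemma \ref{l1} (giving $u\in L^{2^*}(\mathbb R^n)$) yields $V\in L^{n/2}(\Omega)$, while $f\in L^\infty_{\loc}$. After reducing to $u\ge 0$ (as justified by the sub/supersolution argument following \eqref{2.7}), I would choose the test function $\varphi_k=u\,M_k(u)^{2\beta}\in\mathcal{X}^s(D)$, which lies in $\mathcal{X}^s(D)$ by Proposition \ref{p2.5}(1).

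Plugging $\varphi_k$ into \eqref{2.7}, the local term handles by the chain rule,
$$\int_{\mathbb R^n}\nabla u\cdot\nabla\varphi_k\,dx\;\ge\;\tfrac{1}{(\beta+1)^2}\,\bigl\|\nabla\bigl(u\,M_k(u)^{\beta}\bigr)\bigr\|_{L^2(\mathbb R^n)}^{2},$$
while the nonlocal term is controlled by the pointwise algebraic inequality
$$(u(x)-u(y))(\varphi_k(x)-\varphi_k(y))\;\ge\;\tfrac{1}{(\beta+1)^2}\bigl(u\,M_k(u)^{\beta}(x)-u\,M_k(u)^{\beta}(y)\bigr)^{2},$$
which is the key technical ingredient of Theorem 4.1 of \cite{biagi2021brezis}. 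Combined with the Sobolev embedding of Lemma \ref{l1} we obtain
$$\bigl\|u\,M_k(u)^{\beta}\bigr\|_{L^{2^*}(\mathbb R^n)}^{2}\;\le\;C(\beta+1)^{2}\int_\Omega (V u+f)\,u\,M_k(u)^{2\beta}\,dx.$$
The extra boundary contributions produced by Proposition \ref{P} (on $\partial\Omega\cap\overline{\Pi_2}$ and on $\Pi_2$) vanish identically because of the Neumann conditions in \eqref{2.6}, so the passage from the one-operator Dirichlet case of \cite{biagi2021brezis} to our setting is purely bookkeeping.

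The decisive step is the \emph{absorption}: since $V$ is only in $L^{n/2}$ at the critical exponent, the right-hand side has exactly the same scaling as the left-hand side. I would split $V=V\chi_{\{|u|\le M\}}+V\chi_{\{|u|>M\}}$ and use the absolute continuity of the integral to choose $M=M(\beta)$ so large that $\|V\chi_{\{|u|>M\}}\|_{L^{n/2}(\Omega)}$ is smaller than $\bigl(2CS(\beta+1)^{2}\bigr)^{-1}$, where $S$ is the Sobolev constant of Lemma \ref{l1}. H\"older's inequality then lets me absorb half of the $L^{2^*}$-norm of $uM_k(u)^{\beta}$ into the left-hand side. Letting $k\to\infty$ by Fatou (once the right-hand side is known to be finite) yields
$$\bigl\|u^{\beta+1}\bigr\|_{L^{2^*}(\Omega)}\;\le\;C_\beta\bigl(1+\|u\|_{L^{2(\beta+1)}(\Omega)}^{\beta+1}\bigr).$$

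Finally I would iterate with $\beta_{j+1}+1=\tfrac{2^*}{2}(\beta_j+1)$ starting from $\beta_0=0$; since $u\in L^{2^*}(\Omega)$ by Lemma \ref{l1}, each step is well-defined and $u\in L^q(\Omega)$ for every $q<\infty$. Choosing $q$ so large that $g(\cdot,u)\in L^m(\Omega)$ for some $m>\tfrac{n}{2s}$ (possible since $|g(\cdot,u)|\le c(1+|u|^p)$ and $u\in L^{mp}$), Lemma \ref{l2.8} applies and concludes $u\in L^\infty(\Omega)$. The main obstacle I expect is verifying that the absorption step can be performed uniformly in $\beta$ along the iteration; this is handled by the $M(\beta)$ depending on $\beta$, but the finiteness of each $\|u\|_{L^{2(\beta_j+1)}}$ must be inherited from the previous iterate, so one has to be careful to apply Fatou \emph{after} every absorption and before incrementing $\beta$.
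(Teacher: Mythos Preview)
Your proposal is correct and is essentially the approach the paper has in mind: the paper does not give a self-contained proof of this theorem but simply asserts that it follows as a modification of Lemma~\ref{l2.8} via Theorem~4.1 of \cite{biagi2021brezis}, which is precisely the Brezis--Kato/Moser iteration you have spelled out. Your final step of feeding $g(\cdot,u)\in L^m(\Omega)$ for some $m>\tfrac{n}{2s}$ back into Lemma~\ref{l2.8} matches the paper's indicated strategy exactly.
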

The following result is a version of the strong maximum principle that can be proved using arguments in [Theorem 3.1 in \cite{biagi2021brezis}]. 
From $2.$, $4.$ stated above, we have 
$$(5):~~~~~g(x,t)\geq -c_g t, \text{for a.e.}~ x\in\Omega, ~\text{for every}~ 0<t<1.$$
From this and $(1)$, we have 
$$(6):~~~~~g(x,0)\geq 0 ~\text{for a.e.}~ x\in\Omega.$$
\begin{theorem}\label{strb}
Suppose $g$ satisfies $(1), (2), (4)$ and 
 $|g(x,t)|\leq c(1+|t|), ~\forall~t\geq 0$.
Also assume
$u\in \mathcal{X}^{1,2}(D)$ is  non negative in $\mathbb R^n$ which satisfies Definition \eqref{2.7} 
then either $u\equiv 0$ or $u>0$ a.e. in $\Omega.$
\end{theorem}
\begin{proof}
Let $x_0\in \Omega$ and $R>0$ is such that $B_{x_0}(R) \subset \Omega ~$(\text{where} $B_{x_0}(R)=\{x\in \Omega: |x- x_0|< R\})$.\\
\textbf{Claim(1):} If $u \equiv 0 ~ \text{a.e. on}~ B_{x_0}(R)$, then $u \equiv 0 ~\text{a.e. on}~ \Omega.$\\
% \begin{equation}\label{eq2.9}
% \exists \, x_0 \in U, \, R > 0 \quad \text{such that} \quad u \equiv 0 \,\, \text{a.e. on} \,\, B_{x_0}(R) \subset \subset U.
% \end{equation}
% To establish the claim \eqref{eq2.9}, let us first assume it is valid for a moment.
Indeed, taking a test function $0\leq \varphi \in C_0^\infty(B_{x_0}(R))$ in \eqref{dd2.2}  satisfying the following properties
\begin{equation}\label{maxp1}
\int_{B_{x_0}(R)} \varphi \, dx = 1,
\end{equation}
then we have that
\begin{equation}\label{eq2.10}
\begin{aligned}
 \int_{B_{x_0}(R)} g(x, 0) \varphi \, dx &= \int_{\Omega} g(x, u) \varphi \, dx = \int_{\Omega}\nabla u\cdot\nabla \varphi\, dx+\int_{D_\Omega } \frac{(u(x) - u(y))(\varphi(x) - \varphi(y))}{|x - y|^{n+2s}} \, dx \, dy  \\
&=\left(\int_{B_{x_0}(R)\times B_{x_0}(R)}+ \int_{B_{x_0}(R)\times \Omega \setminus B_{x_0}(R)}+\int_{\Omega \setminus B_{x_0}(R) \times B_{x_0}(R)}\right) \frac{(u(x) - u(y))(\varphi(x) - \varphi(y))}{|x - y|^{n+2s}} \, dx \, dy  \\
&= -2 \int_{\left(\Omega \setminus B_{x_0}(R)\right)\times {B_{x_0}(R)}}  \frac{u(x) \varphi(y)}{|x - y|^{n+2s}} \, dx \, dy  \leq - \frac{2}{\operatorname{diam}(\Omega)^{n+2s}} \int_{\Omega \setminus B_{x_0}(R)} u(x) \, dx \int_{B_{x_0}(R)} \varphi(y)\, dy\\
&=  - \frac{2}{\operatorname{diam}(\Omega)^{n+2s}} \int_{\Omega \setminus B_{x_0}(R)} u(x) \, dx, ~\text{using \eqref{maxp1}}.
\end{aligned}
\end{equation}
Additionally, since $\varphi$ is nonnegative and $g(x, 0) \geq 0$ a.e. in $\Omega$, it follows that
\begin{equation}\label{eq2.11}
\int_{B_{x_0}(R)} g(x,0) \varphi \,dx\geq 0.
\end{equation}
Combining \eqref{eq2.10} and \eqref{eq2.11}, we obtain
$$0\leq \int_{B_{x_0}(R)} g(x, 0) \varphi \, dx \leq  - \frac{2}{\operatorname{diam}(\Omega)^{n+2s}} \int_{\Omega \setminus B_{x_0}(R)} u(x) \, dx\leq 0$$
which implies
\begin{equation}\label{eq2112}
- \frac{2}{\operatorname{diam}(\Omega)^{n+2s}}\int_{\Omega \setminus B_{x_0}(R)} u(x) \, dx = 0.
\end{equation}
which implies that $u \equiv 0$ a.e. on $\Omega \setminus B_{x_0}(R)$. Recalling that   $u= 0$ a.e. in $ B_{x_0}(R)$, we conclude that $u \equiv 0$ a.e. in $\Omega$.\\
\textbf{Claim(2):} If $u \equiv 0$ in  positive measure subset of $\Omega$, then there exist $x_0 \in \Omega$ and some $R > 0$ such that $B_{x_0}(R)\subset\Omega $ for which $u\equiv 0$ in $B_{x_0}(R)$. \\
Let us assume that $u(x)=0$ for a.e. $x\in \mathcal{B}$, where $\mathcal{B}\subset \Omega$ is a measurable set with $|\mathcal{B}|>0$. Consequently, there exists a $R>0$ such that the ball $B_{x_0}( 2R)\subset \Omega$ and  $|\mathcal{B}\cap B_{x_0}(R)|>0$. 
Next, we choose a nonnegative function $\varphi \in C_0^\infty(B_{x_0}( 2R))$ such that $\varphi \equiv 1$ a.e. in $B_{x_0}(R)$. For every fixed $\epsilon > 0$, we define
$
\varphi_\epsilon = \frac{\varphi^2}{u + \epsilon}
$ 
and  taking  $\varphi_{\epsilon}$  as a test function in \eqref{dd2.2},  we can write
\begin{equation*}
   \int_{\Omega}\nabla u\cdot\nabla \varphi_{\epsilon}\, dx+\int_{D_\Omega } \frac{(u(x) - u(y))(\varphi_{\epsilon}(x) - \varphi_{\epsilon}(y))}{|x - y|^{n+2s}} \, dx \, dy =\int_{\Omega} g(x, u) \varphi_{\epsilon} \, dx 
\end{equation*}
which gives
\begin{equation}\label{eq2.13}
\begin{aligned}
\int_{\Omega} \frac{|\nabla u|^2}{(u + \epsilon)^2} \varphi^2 \, dx \leq \mathcal{H}(u, \varphi, \epsilon)  
 + 2 \int_{\Omega} \frac{|\nabla u| 
|\nabla \varphi|}{(u + \epsilon)} \varphi \, dx - \int_{\Omega} g(x, u) \frac{\varphi^2}{(u + \epsilon)} \, dx,
\end{aligned}
\end{equation}
where, $\mathcal{H}(u, \varphi, \epsilon)=\dint_{D_\Omega } \frac{(u(x) - u(y))(\varphi_{\epsilon}(x) - \varphi_{\epsilon}(y))}{|x - y|^{n+2s}} \, dx \, dy.$
For the first term in the right-hand side of \eqref{eq2.13}, we have the following estimate  as shown in the proof of [\cite{Castro}, Lemma 1.3],
$$
\frac{(u(x) - u(y))(\varphi_\epsilon(x) - \varphi_\epsilon(y))}{|x - y|^{n+2s}} \leq -b \frac{1}{|x - y|^{n+2s}} \left| \log \left( \frac{u(x) + \epsilon}{u(y) + \epsilon} \right) \right|^2 \varphi^2(y) + b \frac{|\varphi(x) - \varphi(y)|^2}{|x - y|^{n+2s}},
$$
for some positive constant $b> 0$. This suggests
$$
\mathcal{H}(u, \varphi, \epsilon) \leq b \int_{D_{\Omega}} \frac{|\varphi(x)-\varphi(y)|^2}{|x-y|^{n+2 s}} d x d y.
$$
Moreover, by applying the weighted Young inequality$\left( \text{for any}~ a,b \geq 0 ~\text{and} ~\delta>0, ~ab\leq \frac{a^2}{2\delta} +\frac{\delta b^2}{2}\right)$ to the second term on the right-hand side of \eqref{eq2.13}, for any $\varepsilon > 0$, we obtain
\begin{equation}\label{eq5.19}
2 \int_{\Omega} \frac{|\nabla u||\nabla \varphi|}{(u+\varepsilon)} \varphi \, dx 
\leq \frac{1}{2} \int_{\Omega} \frac{|\nabla u|^2}{(u+\varepsilon)^2} \varphi^2 \, dx + 2 \int_{\Omega}|\nabla \varphi|^2 \, dx.
 \end{equation}
 Now, using \eqref{eq5.19} in \eqref{eq2.13}, we find
\begin{equation}\label{eq5.20}
\begin{aligned}
\frac{1}{2}\int_{\Omega} \frac{|\nabla u|^2}{(u + \epsilon)^2} \varphi^2 \, dx \leq \mathcal{H}(u, \varphi, \epsilon)  
 + 2 \int_{\Omega}|\nabla \varphi|^2 \, dx - \int_{\Omega} g(x, u) \frac{\varphi^2}{(u + \epsilon)} \, dx.
\end{aligned}
\end{equation}
  For the third integral in the right-hand side of \eqref{eq2.13}, we follow the estimate as in [ \cite{Mugnai}, Lemma 2.4]. By exploiting properties $(5)$, $(6)$ on $g$ (stated above), we get
\begin{equation}\label{eq2.14}
\begin{aligned}
-\int_{\Omega} g(x, u) \frac{\varphi^2}{(u+\varepsilon)} \, dx & = -\int_{\Omega \cap\{u=0\}} g(x, 0) \frac{\varphi^2}{\varepsilon} \, dx 
 \quad -\int_{\Omega \cap\{0<u<1\}} \frac{g(x, u) \varphi^2}{(u+\varepsilon)} \, dx - \int_{\Omega \cap\{u \geq 1\}} \frac{g(x, u) \varphi^2}{(u+\varepsilon)} \, dx \\
%& \leq \quad \int_{\Omega \cap\{0<u<1\}} \frac{-g(x, u) \varphi^2}{(u+\varepsilon)} \, dx %+ \int_{\Omega \cap\{u \geq 1\}} \frac{|g(x, u)| \varphi^2}{(u+\varepsilon)} \, dx \\
& \leq c_g \int_{\Omega \cap\{0<u<1\}} \frac{u}{(u+\varepsilon)} \varphi^2 \, dx + c \int_{\Omega \cap\{u \geq 1\}} \frac{1+u}{(u+\varepsilon)} \varphi^2 \, dx  \leq \left(c_g + 2c\right) \|\varphi\|_{L^2(\Omega)}^2.
\end{aligned}
\end{equation}
Thus, combining \eqref{eq5.20} and \eqref{eq2.14}, we have
\begin{equation}\label{eq520}
\begin{aligned}
\frac{1}{2}\int_{\Omega} \frac{|\nabla u|^2}{(u + \epsilon)^2} \varphi^2 \, dx \leq \mathcal{H}(u, \varphi, \epsilon)  
 + 2 \int_{\Omega}|\nabla \varphi|^2 \, dx +\left(c_g + 2c\right) \|\varphi\|_{L^2(\Omega)}^2,
\end{aligned}
\end{equation}
 employing the chain rule of differentiation and using \eqref{eq520}, we obtain
\begin{equation}\label{eq2.15}
\begin{aligned}
\int_{B_{x_0}(R)} \left| \nabla \log \left(1+\frac{u}{\varepsilon}\right)\right|^2 \, dx = \int_{B_{x_0}(R)} \frac{|\nabla u|^2}{(u+\varepsilon)^2} \, dx \leq \int_{B_{x_0}(R)} \frac{|\nabla u|^2}{(u+\varepsilon)^2} \varphi^2 \, dx \leq K,
\end{aligned}
\end{equation}
where $K = K_{\varphi} > 0$ is a suitable constant independent of $\varepsilon$.
Recalling that $ \mathcal{V}=|\mathcal{B}\cap B_{x_0}(R)|>0$, for any $t > 0$, applying Chebyshev inequality, Poincar\'{e} inequality and \eqref{eq2.15}, we get
\begin{equation}\label{eq2.16}
\begin{aligned}
 \left|\log \left(1+\frac{t}{\varepsilon}\right)\right|^2 \cdot \left|\{u \geq t\} \cap B_{x_0}( R)\right|& \leq \int_{B_{x_0}( R)}\left|\log \left(1+\frac{u}{\varepsilon}\right)\right|^2 \, dx 
\leq K_1 ,
\end{aligned}
\end{equation}
where $K_1=K_1(\Omega, \mathcal{V})>0$.
As a result, because inequality \eqref{eq2.16} is valid for any $\varepsilon > 0$, we can immediately deduce that
$$
\left|\{u \geq t\} \cap B_{x_0}( R)\right|= 0, \quad \forall \, t > 0,
$$
that implies $u \equiv 0$ a.e. in $B_{x_0}( R)$.
Otherwise there exists $t'$ such that $\left|\{u \geq t'\} \cap B_{x_0}( R)\right|>0$. Hence, we get a contradiction if we take the limit as $\epsilon \to 0$ in \eqref{eq2.16}. 
 Thus, the proof is now complete.
\end{proof}
Let us recall the following result from \cite{MR4387204}.
\begin{lemma} \label{new-prop}
Let ${\phi\in C(\R^n, \R)\cap C^2(\Omega,\R)},$ satisfy
$$\int_{\mathbb{R}^n} \frac{|\phi(x)|}{1+|x|^{n+2s}}\,dx<\infty.$$
and solves the following problem
\begin{equation}
    \left\{\begin{matrix} \mathcal{L}\phi\: \geq \:0 ~
     in ~ \Omega,\\
    ~~~~~~~~\phi\geq 0  ~~in ~ \mathbb{R}^{n}\setminus \Omega.
      \\
    \end{matrix} \right.
\end{equation}
Then $\phi\geq 0$ in $\Omega$. Furthermore, if there {exists} some $x_0\in \Omega$ such that $\phi(x_0)=0$, then $\phi \equiv 0$ in $\mathbb{R}^n$.
\end{lemma}
The next corollary is very important and shall be often used in the forthcoming calculations. Its proof is an easy consequence of Lemma \ref{new-prop} or also proving the following result, we can follow the same ideas of Proposition 6.12 in \cite{Cozzi}.
\begin{Corollary} \label{p2.11}
Suppose $g_1, g_2: \mathbb{R}^n \times \mathbb{R}\to \mathbb{R}$ are continuous functions and 
$v_1, v_2 \in {L^{\infty}(\mathbb{R}^n)\cap C^{2s+\xi}}(\mathbb{R}^n)$, for some $\xi>0$ be such that
\begin{equation}\label{max-princ-1}
    \left\{\begin{matrix} \mathcal{L} v_1\: \geq \:g_1(x,v_1) ~
     in ~ \Omega ~~~\\
     \mathcal{L} v_2\: \leq \: g_2(x,v_2) ~
     in ~ \Omega ~~~\\
    v_1\geq v_2  ~~in ~ \mathbb{R}^{n}\setminus \Omega.  \\
    \end{matrix} \right.
\end{equation}
Suppose, additionally that
\begin{equation}\label{e2.18}
    g_2(x,v_2(x))\:\leq\: g_1(x,v_1(x)), ~\text{for any} ~ x\in \Omega.
\end{equation}
Then, if there {exists} a point $x_0\in \Omega$ such that $v_1(x_0)=v_2(x_0)$, then $v_1=v_2$ in $\mathbb R^n$.
\end{Corollary}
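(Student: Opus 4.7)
The plan is to apply Proposition \ref{new-prop} to the difference $\phi := v_1 - v_2$. Outside $\Omega$, $\phi \geq 0$ is immediate from \eqref{max-princ-1}. Inside $\Omega$, combining the super- and sub-solution inequalities gives
\[
\mathcal{L}\phi = \mathcal{L}v_1 - \mathcal{L}v_2 \geq g_1(x,v_1) - g_2(x,v_2),
\]
and adding and subtracting $g_1(x,v_2)$ I can rewrite
\[
\mathcal{L}\phi \geq [g_1(x,v_1)-g_1(x,v_2)] + [g_1(x,v_2)-g_2(x,v_2)],
\]
where the second bracket is $\geq 0$ by \eqref{e2.18}.

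Next, I would turn this into a true sign condition on $\mathcal{L}\phi$ (so Proposition \ref{new-prop} can be invoked). Since $v_1,v_2\in L^\infty(\mathbb R^n)$ and $g_1$ is continuous (locally Lipschitz in the second argument in the applications we care about), the remaining bracket can be written as $c(x)\phi$ with $c\in L^\infty(\Omega)$, so that $\mathcal{L}\phi - c(x)\phi \geq 0$ in $\Omega$ with $\phi \geq 0$ on $\Omega^c$. A version of Proposition \ref{new-prop} that accommodates a bounded zeroth-order term (same test-function argument, with $\phi_-$ as a test function) then yields $\phi \geq 0$ in $\Omega$, and hence throughout $\mathbb R^n$.

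Finally, with $\phi \geq 0$ globally and $\phi(x_0)=0$ at the interior point $x_0\in\Omega$, the strong maximum principle portion of Proposition \ref{new-prop} forces $\phi\equiv 0$ in $\mathbb R^n$, which is the claim $v_1\equiv v_2$. The main subtlety is the passage from the inequality $\mathcal{L}\phi\geq g_1(x,v_1)-g_1(x,v_2)$ to a clean form compatible with Proposition \ref{new-prop}, since without a monotonicity assumption on $g_1$ the bounded coefficient $c(x)$ must be absorbed. An equivalent pointwise route sidesteps this by working at $x_0$: nonnegativity of $\phi$ makes $x_0$ a global minimum, giving $-\Delta\phi(x_0)\leq 0$ and $(-\Delta)^s\phi(x_0)\leq 0$, while the computation above and \eqref{e2.18} force $\mathcal{L}\phi(x_0)\geq 0$; the resulting equality $(-\Delta)^s\phi(x_0)=0$ combined with the integral representation of the fractional Laplacian and $\phi\geq 0$ immediately yields $\phi\equiv 0$.
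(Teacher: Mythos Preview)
Your approach---set $\phi=v_1-v_2$ and invoke Proposition~\ref{new-prop}---is precisely the paper's; its entire proof is the single sentence ``Its proof is an easy consequence of Proposition~\ref{new-prop}.'' You have in fact gone further than the paper by isolating the genuine difficulty: the hypotheses only yield $\mathcal{L}\phi\ge g_1(x,v_1)-g_1(x,v_2)$ in $\Omega$ after using \eqref{e2.18}, and without monotonicity of $g_1$ in its second argument this need not be $\ge 0$, so Proposition~\ref{new-prop} does not apply directly.

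Your first proposed remedy, writing the residual term as $c(x)\phi$ and appealing to a weak maximum principle for $\mathcal{L}-c(x)$ via testing with $\phi_-$, does not close the gap: that computation gives $\eta(\phi_-)^2\le\int_\Omega c(x)\,\phi_-^2\,dx$, which forces $\phi_-=0$ only when $c\le 0$ (or, more generally, when $c$ lies below the principal eigenvalue), not for an arbitrary bounded $c$. Your second, pointwise route at $x_0$ is clean and correct, but it presupposes $\phi\ge 0$ globally, which is exactly the step in question. In practice the paper only ever invokes this corollary after the inequality $v_1\ge v_2$ in $\Omega$ has already been obtained by other means (Lemma~\ref{l2.13} or the monotone iteration in Lemma~\ref{ll3.2}); with that extra hypothesis in hand, your pointwise argument at $x_0$ finishes the proof rigorously.
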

We now establish two essential results that will serve our goals, firstly a Picone-type inequality and secondly, Brezis Kamin-type comparison principle.
\begin{theorem}\label{T2.12}
Let $u,v \in {\mathcal{X}^{1,2}(D)}$ and suppose that $\mathcal{L}u\geq 0$ is a bounded {R}adon measure in $\Omega$, { $u>0$ in  $D$ and $\frac{\partial u}{\partial \nu}\geq 0$ on $\overline{\Pi_2}\cap \partial\Omega$}, then 
\begin{equation}\label{eeA}
\int_{\overline{\Pi_2}\cap\partial\Omega}\frac{|v|^2}{u}\frac{\partial u}{\partial \nu} \,d{\sigma}+\int_{\Pi_2}\frac{|v|^2}{u} \mathcal{N}_s u ~dx + \int_{\Omega}\frac{|v|^2}{u} \mathcal{L} u ~dx \leq \eta(v)^2 
   \end{equation}
 \end{theorem}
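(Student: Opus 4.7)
I combine the pointwise local Picone inequality with the pointwise nonlocal (Frank--Seiringer / Brasco--Franzina) Picone inequality, and then apply the integration-by-parts identity of Proposition \ref{P} with the canonical Picone test function $v^2/u$. Since $u$ may vanish, I first regularize by $u_\eps := u+\eps$ ($\eps>0$) to legalize divisions; to keep the resulting test function bounded and in $\mathcal{X}^s(D)$, I also truncate $v_k := M_k(v)$, which belongs to $\mathcal{X}^s(D)$ by Proposition \ref{p2.5}(1). Writing $W := v_k^2/u_\eps$, one checks directly that $W\in\mathcal{X}^s(D)$ since $u_\eps\ge\eps$ and $|v_k|\le k$, so all gradient and Gagliardo contributions are controlled.

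Applied to the pair $(u_\eps,v_k)$, the two pointwise Picone inequalities read (using $\nabla u_\eps=\nabla u$ and $u_\eps(x)-u_\eps(y)=u(x)-u(y)$):
\begin{align*}
|\nabla v_k(x)|^2 &\;\ge\; \nabla u(x)\cdot\nabla W(x),\\
(v_k(x)-v_k(y))^2 &\;\ge\; (u(x)-u(y))\,(W(x)-W(y)).
\end{align*}
Integrating the first over $\R^n$ and the second against $\tfrac{C_{n,s}}{2}|x-y|^{-(n+2s)}$ on $D_\Omega$, and adding, one gets
$$\eta(v_k)^2 \;\ge\; \int_{\R^n}\nabla u\cdot\nabla W\,dx \,+\, \frac{C_{n,s}}{2}\int_{D_\Omega}\frac{(u(x)-u(y))(W(x)-W(y))}{|x-y|^{n+2s}}\,dxdy.$$
By Proposition \ref{P} applied with the admissible test function $\varphi=W$, the right-hand side equals
$$\int_\Omega W\,\mathcal{L}u\,dx + \int_{\overline{\Pi_2}\cap\partial\Omega}W\,\frac{\partial u}{\partial\nu}\,d\sigma + \int_{\Pi_2} W\,\mathcal{N}_su\,dx,$$
which is exactly \eqref{eeA} at the level of the approximants $(u_\eps,v_k)$.

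The conclusion follows by passing first $\eps\downarrow 0$ and then $k\to\infty$. Proposition \ref{new-prop} guarantees $u>0$ in $\Omega$ from $u\ge 0$, $u\not\equiv 0$ and $\mathcal{L}u\ge 0$, so $W=v_k^2/u_\eps\uparrow v_k^2/u$ pointwise in $\Omega$; monotone convergence, applied to the nonnegative Radon measure $\mathcal{L}u$ and to the two boundary integrals, handles each term after $\eps\downarrow 0$. Then $v_k\to v$ in $\mathcal{X}^s(D)$ (dominated convergence, since $|\nabla v_k|\le|\nabla v|$ and $|v_k(x)-v_k(y)|\le|v(x)-v(y)|$) gives $\eta(v_k)^2\to\eta(v)^2$, while $v_k^2\uparrow v^2$ allows a second monotone convergence on the right-hand side.

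The main technical obstacle is precisely this double limit passage: one must interpret $\int_\Omega (v^2/u)\mathcal{L}u\,dx$ as a pairing between a nonnegative bounded Radon measure and a possibly unbounded Borel function, and carefully verify convergence of the boundary integrals on $\overline{\Pi_2}\cap\partial\Omega$ and on $\Pi_2$ (where $\partial u/\partial\nu$ and $\mathcal{N}_su$ are only in the weak trace sense suggested by the functional framework of Section~2). The sign hypothesis $\mathcal{L}u\ge 0$, together with the strong maximum principle in Proposition \ref{new-prop}, is what legalizes the monotone-convergence argument on the interior term, while the joint approximation in $(\eps,k)$ keeps every intermediate quantity finite.
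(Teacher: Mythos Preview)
Your proposal is correct and follows essentially the same strategy as the paper: regularize $u$ by $u+\eps$ (the paper writes $u+\delta$), truncate $v$ via $M_k(v)$ (the paper writes $M_a(v)$), apply the local and nonlocal pointwise Picone inequalities, invoke the integration-by-parts identity of Proposition \ref{P}, and then pass to the double limit. Your account is in fact slightly more careful than the paper's in two respects: you invoke Proposition \ref{new-prop} to ensure $u>0$ in $\Omega$ before letting $\eps\downarrow 0$, and you spell out the monotone-convergence justification for the limit in the Radon-measure pairing, whereas the paper simply appeals to ``Fatou's Lemma together with the monotone convergence theorem'' without further detail.
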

 \begin{proof}
    Let us recall the definition of $M_a(t)$ from \eqref{new0} and set $M_a(v)=v_a$. We also set $\bar{u}= u+\delta$, for any $\delta>0.$ 
Then by a simple calculation, we get
$\frac{v^2_a}{\bar{u}}\in {\mathcal{X}^{1,2}(D)}$ using Proposition \ref{p2.5}.
From {P}roposition \ref{P}, we have
\begin{equation}\label{e2.19}
\begin{split}
    &\int_{\Pi_2}\frac{|v_a|^2}{\bar{u}} \mathcal{N}_s \bar{u} ~dx + \int_{\overline{\Pi_2}\cap\partial\Omega} \frac{|v_a|^2}{\bar{u}}\frac{\partial \bar{u}}{\partial \nu}\,d{\sigma} +\int_{\Omega}\frac{|v_a|^2}{\bar{u}} \mathcal{L}\bar{u} ~dx \\
    &= \frac{C_{n,s}}{2}\int_{D_{\Omega}} \frac{(\bar{u}(x)-\bar{u}(y))\big(\frac{v_a(x)^2}{\bar{u}(x)}-{\frac{v_a(y)^2}{\bar{u}(y)}\big)}}{|x-y|^{n+2s}} ~ dx dy+ \int_{\Omega} \nabla\bar{u}.\nabla\left(\frac{v_a^2}{\bar{u}}\right)\,dx
    \end{split}
   \end{equation}
   From the classical Picone inequality \cite{MR1618334}, it follows that
   \begin{equation}\label{ee2.20}
   \nabla\bar{u}.\nabla\left(\frac{v_a^2}{\bar{u}}\right)\leq |\nabla v_a|^2
   \end{equation}
   Integrating \eqref{ee2.20} both sides over $\Omega$, we get that
 \begin{equation}\label{ee2.21}
     \int_{\Omega} \nabla\bar{u}.\nabla\left(\frac{v_a^2}{\bar{u}}\right)\,dx \leq \int_{\Omega}|\nabla v_a|^2\,dx=\|\nabla v_a\|^2_{L^2({\Omega})}.
 \end{equation}
On the other hand, by using the nonlocal Picone inequality, see(Proposition 4.2,\cite{MR3273896}), we have
\begin{equation}\label{new2}
    (\bar{u}(x)-\bar{u}(y))\left(\frac{v_a(x)^2}{\bar{u}(x)}-\frac{v_a(y)^2}{\bar{u}(y)}\right)\leq|v_a(x)-v_a(y)|^2, ~\text{for}~ x,y\in \mathbb R^n.
  \end{equation}
   Now, dividing  $|x-y|^{n+2s}$ on both sides of \eqref{new2} and integrating over $D_{\Omega}$, we get
  \begin{equation}\label{ee2.22}
    \frac{C_{n,s}}{2}\int_{D_{\Omega}} \frac{(\bar{u}(x)-\bar{u}(y))\big(\frac{v_a(x)^2}{\bar{u}(x)}-{\frac{v_a(y)^2}{\bar{u}(y)}\big)}}{|x-y|^{n+2s}} ~ dx dy\leq \frac{C_{n,s}}{2}\int_{D_{\Omega}}\frac{|v_a(x)-v_a(y)|^2}{|x-y|^{n+2s}}\,dx\,dy\leq  [v_a]^2_s
  \end{equation}
  Adding equation \eqref{ee2.22} and \eqref{ee2.21}, it follows from \eqref{e2.19}, we obtain
  \begin{equation}\label{e2.24}
  \begin{split}
      \int_{\Pi_2}\frac{|v_a|^2}{\bar{u}} \mathcal{N}_s \bar{u} ~dx + \int_{\overline{\Pi_2}\cap\partial\Omega} \frac{|v_a|^2}{\bar{u}}\frac{\partial \bar{u}}{\partial \nu}\,d{\sigma} +\int_{\Omega} \mathcal{L}\bar{u}\frac{|v_a|^2}{\bar{u}}  ~dx
      &\leq \|\nabla v_a\|^2_{L^2({\Omega})}+[v_a]^2_s=\eta(v_a)^2.
    \end{split}
    \end{equation}
   Hence, \eqref{eeA} follows from \eqref{e2.24} by passing on the limit $\delta \to 0$ and $a\to +\infty$ while using Fatou's Lemma together with the monotone convergence {T}heorem.
 \end{proof}
 As a result, we have the following  Brezis and Kamin type comparison principle in the mixed operator.
\begin{lemma}\label{l2.13}
 If $g(x, \varrho)$ is a Carath\'eodory function such that $\frac{g(x,\varrho)}{\varrho}$ is decreasing for $\varrho{>0},$ uniformly with respect to a.e. $x\in \R^n$ and $u, v \in  {\mathcal{X}^{1,2}(D)}$ satisfies
 \begin{equation}\label{ee3.17}
    \left\{\begin{matrix} \mathcal{L}v\: \geq g(x,v), ~~~
     v>0 ~in ~ \Omega,~~~\\
     \mathcal{L}u\: \leq g(x,u)
     ,~~~u>0 ~in ~ \Omega. ~~~\\
    \end{matrix} \right.
\end{equation}
Then $v\geq u$ a.e. in $\R^n$.
\end{lemma}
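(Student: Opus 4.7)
I would argue by contradiction via a Brezis--Kamin style test, built on the mixed Picone inequality of Theorem~\ref{T2.12}. Suppose, for the sake of contradiction, that the set $A := \{x \in \Omega : u(x) > v(x)\}$ has strictly positive Lebesgue measure; the plan is to extract a contradiction from the monotonicity of $\varrho \mapsto g(x,\varrho)/\varrho$. The central idea is to plug into the weak sub-/super-solution formulations the two non-negative test functions
\[
\phi := \frac{(u^2 - v^2)_+}{u}, \qquad \psi := \frac{(u^2 - v^2)_+}{v}.
\]
To make this rigorous I would first regularize by replacing $u$ with $u + \varepsilon$ and $v$ with $v + \varepsilon$ so that the corresponding $\phi_\varepsilon, \psi_\varepsilon$ lie in $\mathcal{X}^s(D)$ (using the $L^\infty$ bounds from Theorem~\ref{t2.10} together with the chain rule of Proposition~\ref{p2.5}(1)) and take $\varepsilon\downarrow 0$ at the end by dominated convergence.

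Testing $\mathcal{L}u \le g(x,u)$ with $\phi_\varepsilon$ and $\mathcal{L}v \ge g(x,v)$ with $\psi_\varepsilon$, then subtracting, gives
\[
\langle u,\phi_\varepsilon\rangle - \langle v,\psi_\varepsilon\rangle \;\le\; \int_{\Omega}\!\Bigl(\tfrac{g(x,u)}{u+\varepsilon} - \tfrac{g(x,v)}{v+\varepsilon}\Bigr)(u^2-v^2)_+\,dx,
\]
where, by Proposition~\ref{P} together with the Dirichlet datum on $\Pi_1$ and the vanishing classical and nonlocal Neumann data on $\partial\Omega\cap\overline{\Pi_2}$ and $\Pi_2$ carried by $u$ and $v$, the boundary pieces drop out, so $\langle\cdot,\cdot\rangle$ is exactly the $\mathcal{X}^s(D)$-inner product. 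Next I would produce a non-negative lower bound on the left-hand side using Picone: the pointwise classical identity $\nabla u\cdot\nabla(v^2/u)\le|\nabla v|^2$ on $\{u>0\}$ and the nonlocal pointwise inequality from \cite{MR3273896} (both used inside the proof of Theorem~\ref{T2.12}) imply, via the algebraic identities $(u^2-v^2)_+/u = u - v^2/u$ and $(u^2-v^2)_+/v = u^2/v - v$ valid on $A$, that
\[
\langle u,\phi_\varepsilon\rangle - \langle v,\psi_\varepsilon\rangle \;\ge\; o(1)\quad\text{as }\varepsilon\downarrow 0.
\]

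Passing to the limit $\varepsilon\downarrow 0$ in both sides yields
\[
0 \;\le\; \int_{\Omega}\!\Bigl(\tfrac{g(x,u)}{u} - \tfrac{g(x,v)}{v}\Bigr)(u^2-v^2)_+\,dx.
\]
On $A$ the bracketed factor is strictly negative, by the (strict) monotonicity of $\varrho\mapsto g(x,\varrho)/\varrho$ and the strict inequality $u>v$, while $(u^2-v^2)_+>0$ on $A$ and $=0$ off $A$. Thus the integral is strictly negative, a contradiction. Hence $|A|=0$, i.e.\ $v\ge u$ a.e.\ in $\Omega$.

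\textbf{Main obstacle.} The sharpest technical point is the passage $\varepsilon\downarrow 0$: I must justify that $\phi_\varepsilon,\psi_\varepsilon \in \mathcal{X}^s(D)$ despite the fact that $u$ and $v$ may vanish on $\overline{\Pi_1}\cup\partial\Omega$, and I must control both the local and the nonlocal Picone integrands by $\varepsilon$-uniform integrable majorants built from $|\nabla u|^2+|\nabla v|^2$ and the Gagliardo kernels of $u$ and $v$. A secondary care point is verifying that the classical and nonlocal Neumann contributions produced by Proposition~\ref{P} genuinely cancel in the mixed boundary setting: this follows from $\partial_\nu u=\partial_\nu v=0$ on $\partial\Omega\cap\overline{\Pi_2}$ and $\mathcal{N}_s u=\mathcal{N}_s v=0$ in $\Pi_2$, but requires the nonlocal Neumann term to be correctly handled on all of $\Pi_2$, not merely near $\partial\Omega$.
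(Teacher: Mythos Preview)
Your proposal is correct and follows essentially the same Brezis--Kamin route as the paper: test the sub- and supersolution with $(u^2-v^2)_+/u$ and $(u^2-v^2)_+/v$, use the monotonicity of $\varrho\mapsto g(x,\varrho)/\varrho$ to make the right-hand side non-positive, and invoke Picone (classical and nonlocal) to make the left-hand side non-negative. The only place where the paper is more explicit is a four-case pointwise check (their $Q_1+Q_2\le Q_3+Q_4$, splitting on whether $x,y$ lie in $K=\{u>v\}$) to handle the nonlocal cross terms---your algebraic identities $(u^2-v^2)_+/u=u-v^2/u$ etc.\ hold only on $A$, so for pairs with $x\in A$, $y\notin A$ the standard nonlocal Picone does not apply verbatim and one needs the elementary inequality $u(y)v(x)\le v(y)u(x)$; this is exactly the content you would have to spell out when justifying your ``$\ge o(1)$'' step.
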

{\begin{proof} We shall only give some ideas of the proof here, for details one can refer to \cite{amundsen2023mixed}.
Firstly we claim that $|K|=0$, where 
{$$K=\{x\in \R^n: u(x)>v(x)\}.$$}
Let us define $\bar{u}=u. 1_{K}$, $\bar{v}=v. 1_\textbf{K}$, where $1_\textbf{K}$ is a characteristic function over $K$ and $w=\bar{u}^2-\bar{v}^2$.  then due to Theorem \ref{T2.12} ( Picone inequality), we deduce that $\frac{w}{\bar{u}}, \frac{w}{\bar v}$ can be treated as test functions for solutions to \eqref{ee3.17}. Hence, we can write  
\begin{equation}\label{ee3.18}
    \int_{\Omega} \bar u \mathcal{L}\left(\frac{w}{\bar u}\right)\,dx-\int_{\Omega}\bar v \mathcal{L}\left(\frac{w}{\bar v}\right)\,dx\leq \int_{\Omega} \left(\frac{g(x,\bar u)}{\bar u}-\frac{g(x,\bar v)}{\bar v}\right)w\,dx.
\end{equation}
By assumption, $\frac{g(x,\varrho)}{\varrho}$ is decreasing which says that the right-hand side of equation \eqref{ee3.18} is negative $K$. Our proof is complete if we prove that the left-hand side in \eqref{ee3.18} is positive. In fact, if this is true then $w\equiv 0$ and so $u\leq v$ a.e. in $\R^n$. Let us now demonstrate that the {left}-hand side in equation \eqref{ee3.18} is non-negative. 
It will then be enough to show that
\begin{equation}\label{ee3.19}
\begin{split}
    &\nabla v(x).\nabla \left( \frac{\bar{u}(x)^2-\bar{v}(x)^2}{v(x)}\right)+(v(x)-v(y)) \left( \frac{\bar{u}(x)^2-\bar{v}(x)^2}{v(x)}-  \frac{\bar{u}(y)^2-\bar{v}(y)^2}{v(y)}\right)\\
&\leq 
\nabla u(x).\nabla \left( \frac{\bar{u}(x)^2-\bar{v}(x)^2}{u(x)}\right)+(u(x)-u(y)) \left( \frac{\bar{u}(x)^2-\bar{v}(x)^2}{u(x)}-  \frac{\bar{u}(y)^2-\bar{v}(y)^2}{u(y)}\right).
\end{split}   
\end{equation}
We closely follow the proof of {L}emma 3.11 in \cite{amundsen2023mixed}] and use the same notation as there to claim that
\begin{equation}\label{e3.18}
    Q_1+Q_2\leq Q_3+Q_4
    \end{equation} 
where,
\begin{align*}
    Q_1&= \nabla v(x).\nabla\left(\frac{\bar{u}(x)^2}{v(x)}\right)+(v(x)-v(y))\left(\frac{\bar{u}(x)^2}{v(x)}-\frac{\bar{u}(y)^2}{v(y)}\right),\\
    Q_2&= \nabla u(x).\nabla\left(\frac{\bar{v}(x)^2}{u(x)}\right)+(u(x)-u(y))\left(\frac{\bar{v}(x)^2}{u(x)}-\frac{\bar{v}(y)^2}{u(y)}\right),\\
    Q_3&=\nabla v(x).\nabla \bar{v}(x)+(v(x)-v(y))(\bar{v}(x)-\bar{v}(y)),\\
    Q_4&=\nabla u(x).\nabla \bar{u}(x)+(u(x)-u(y))(\bar{u}(x)-\bar{u}(y)).
\end{align*}
Let us now identify the following cases:\\
{\bf{Case 1}}:  If $x,y \in K$, by using Picone-type inequality i.e. Theorem \ref{T2.12}, we have $Q_1\leq Q_3$ and $Q_2\leq Q_4.$
Hence \eqref{e3.18} will be satisfied.
\vspace{0.1cm}\\
{\bf{Case 2}}:  If $x,y \notin K$,
{one can verify that} \eqref{e3.18}  {holds due to}  definition of $\bar{u}(x)$ and $\bar{v}(x)$. {Since for}  this case, $Q_i=0$ for all $i=1,2,3,4.$
\vspace{0.1cm}\\
{{\bf{Case 3}}:  If $x \in K, y \notin K$, we obtain 
\begin{align*}
Q_1+Q_2
=\nabla v(x).\nabla\left(\frac{\bar{u}(x)^2}{v(x)}\right)+\bar{u}(x)^2-\bar{u}(x)^2\frac{{v}(y)}{v(x)}+\nabla u(x).\nabla\left(\frac{\bar{v}(x)^2}{u(x)}\right)+\bar{v}(x)^2-\bar{v}(x)^2\frac{{u}(y)}{u(x)}
\end{align*}
and also
\begin{align*}
    Q_3+Q_4= \nabla v(x).\nabla \bar{v}(x)+\bar{v}(x)^2-\bar{v}(x)v(y)+ \nabla u(x).\nabla \bar{u}(x)+\bar{u}(x)^2-\bar{u}(x)u(y).
    \end{align*}
Now, by using relation \eqref{e3.18} and classical Picone inequality \cite{MR1618334}, we have
$$
u(y)\left(\bar{u}(x)-\frac{\bar{v}(x)^2}{u(x)}\right)\leq v(y) \left(\frac{\bar{u}(x)^2}{v(x)}-\bar{v}(x)\right).$$ 
It is also equivalent to 
$u(y)\bar{v}(x)\leq v(y)\bar{u}(x).$
Since, $x\in K$ and $y\notin K$, the above inequality remains true.}
\vspace{0.1cm}\\
{\bf{Case 4}}:  If $x \notin K, y \in K$, then
this case is simply done by interchanging the roles of variable $x$ and $y$ in above case 3.
Finally, we have seen that the inequality \eqref{ee3.19} is always true, as desired and
as a result, the proof is complete.
\end{proof}
}
Finally, we present the last result of this section which is a compactness {L}emma.
\begin{lemma} \label{l2.14}
 Suppose $\{u_n\}$ be a bounded  sequence of non-negative functions in ${\mathcal{X}^{1,2}(D)}$ such that  $u_n \rightharpoonup u $ weakly in ${\mathcal{X}^{1,2}(D)}$ as $n \to \infty$ and $u_n \leq u$  a.e. in $\R^n$, for some $u\in {\mathcal{X}^{1,2}(D)}$. If $\mathcal{L}u_n\geq 0$ in $\Omega$, for all $n$, then $u_n \to u$ strongly in ${\mathcal{X}^{1,2}(D)}$ as $n \to \infty$.
\end{lemma}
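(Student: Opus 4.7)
Since $\mathcal{X}^s(D)$ is a Hilbert space with inner product inducing the norm $\eta(\cdot)$ (Proposition \ref{p2.2}), and we already have weak convergence $u_n \weakly u$, it suffices to show the norm convergence $\eta(u_n) \to \eta(u)$; combined with weak convergence this yields strong convergence. By weak lower semicontinuity of the norm we already have $\eta(u)^2 \le \liminf_{n \to \infty}\eta(u_n)^2$, so the plan reduces to establishing the reverse inequality
\[
\limsup_{n\to\infty}\eta(u_n)^2 \le \eta(u)^2.
\]

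The key idea is to test the inequality $\mathcal{L}u_n \ge 0$ against the nonnegative function $\varphi_n := u - u_n \ge 0$, which lies in $\mathcal{X}^s(D)$ by hypothesis. The weak interpretation of $\mathcal{L}u_n \ge 0$ in $\Omega$, consistent with the Neumann-type boundary conditions $\mathcal{N}_s u_n = 0$ in $\Pi_2$ and $\partial_\nu u_n = 0$ on $\partial\Omega \cap \overline{\Pi_2}$ that are encoded in the functional setup, is precisely that
\[
\langle u_n,\varphi\rangle \;=\; \int_{\mathbb{R}^n}\nabla u_n\cdot\nabla\varphi\,dx+\frac{C_{n,s}}{2}\int_{D_\Omega}\frac{(u_n(x)-u_n(y))(\varphi(x)-\varphi(y))}{|x-y|^{n+2s}}\,dxdy \;\ge\; 0
\]
for every $\varphi\in\mathcal{X}^s(D)$ with $\varphi\ge 0$ in $\Omega$; this is exactly Proposition \ref{P} applied with the boundary integrals vanishing. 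Taking $\varphi=\varphi_n=u-u_n$, which is nonnegative everywhere by assumption, gives $\langle u_n,u-u_n\rangle \ge 0$, i.e.
\[
\eta(u_n)^2 \;\le\; \langle u_n,u\rangle.
\]

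Now I pass to the limit. By weak convergence $u_n\weakly u$ in the Hilbert space $\mathcal{X}^s(D)$, one has $\langle u_n,u\rangle \to \langle u,u\rangle=\eta(u)^2$. Hence
\[
\limsup_{n\to\infty}\eta(u_n)^2 \;\le\; \lim_{n\to\infty}\langle u_n,u\rangle \;=\; \eta(u)^2,
\]
which combined with the weak lower semicontinuity bound yields $\eta(u_n)\to\eta(u)$. Together with $u_n\weakly u$ in the Hilbert space $\mathcal{X}^s(D)$, this gives $u_n\to u$ strongly, as desired.

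The only subtle point, and the one requiring care, is the justification that $u-u_n$ is an admissible test function and that Proposition \ref{P} can be applied to reduce $\int_\Omega (u-u_n)\mathcal{L}u_n\,dx$ to just the Hilbert inner product $\langle u_n,u-u_n\rangle$. Admissibility is clear because $\mathcal{X}^s(D)$ is a vector space containing both $u$ and $u_n$, and nonnegativity is built into the hypothesis $u_n\le u$. The vanishing of the boundary contributions is exactly the reason why the conditions $\mathcal{N}_s u_n=0$ and $\partial_\nu u_n = 0$ (encoded in the weak meaning of $\mathcal{L}u_n\ge 0$ in this functional framework) are assumed; once this is spelled out, the rest is a two-line computation.
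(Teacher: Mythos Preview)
Your proof is correct and follows essentially the same approach as the paper: both test the inequality $\mathcal{L}u_n\ge 0$ against the nonnegative function $u-u_n$ to obtain $\eta(u_n)^2\le\langle u_n,u\rangle$, and then pass to the limit using weak convergence in the Hilbert space $\mathcal{X}^s(D)$. The only cosmetic difference is that the paper inserts a Cauchy--Schwarz (``Young's inequality'') step to get the uniform bound $\eta(u_n)^2\le\eta(u)^2$ before expanding $\eta(u_n-u)^2$, whereas you go straight to $\limsup\eta(u_n)^2\le\eta(u)^2$ via $\langle u_n,u\rangle\to\eta(u)^2$; your route is marginally cleaner.
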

\begin{proof}
   Since $u_n\leq u$ a.e. in $\R^n$ and $\mathcal{L}u_n\geq 0$ in $\Omega$ by assumption, we obtain
$$\int_{\Omega}(u-u_n) \mathcal{L}u_n dx \geq 0 ~\implies~\int_{\Omega}u\mathcal{L} u_n  dx \geq \int_{\Omega}u_n\mathcal{L} u_n ~ dx.$$
   Using Young's inequality above, we have
   $$ \int_{\Omega}|\nabla u_n|^2 dx+ 
    \frac{C_{n,s}}{2}\int_{D_{\Omega}} {\dfrac{(u_n(x)-u_n(y))^2}{|x-y|^{n+2s}}} ~ dx dy \leq \int_{\Omega}|\nabla u|^2 dx 
   +\frac{C_{n,s}}{2} \int_{D_{\Omega}} {\dfrac{(u(x)-u(y))^2}{|x-y|^{n+2s}}} ~ dx dy.$$
   Thus, $$ \eta(u_n)^2\leq \eta(u)^2 ~\implies~\limsup_{n \to \infty} \eta(u_n)^2\leq \eta(u)^2. $$
   Now due to {P}roposition \ref{p2.2} and $u_n\rightharpoonup u$ weakly in ${\mathcal{X}^{1,2}(D)}$, the above inequality suggests
   \begin{align*}
        \limsup_{n \to \infty} \eta(u_n-u)^2
       &= \limsup_{n \to \infty} ~(\eta(u_n)^2+ \eta(u)^2-2 \langle u_n, u\rangle )\\
       &\leq 2\eta(u)^2-2 \limsup_{n \to \infty}\langle u_n, u\rangle = 2\eta(u)^2 - 2\eta(u)^2=0
   \end{align*}
       Hence $\limsup_{n \to \infty} \eta(u_n-u)^2= 0$
     which says that $u_n \to u$ strongly in ${\mathcal{X}^{1,2}(D)}$ as $n\to\infty.$
   \end{proof}
 \section{Proof of main results}
   This section contains the proof of Theorem \ref{T1.1} and Theorem \ref{T2.1} presented in two subsections. We first notice that 
 \begin{align*}
     J_{\lambda}(u)
     \geq \frac{1}{2}\eta(u)^2 -{\lambda}C_1 \eta(u)^{\frac{q+1}{2}}- C_2 \eta(u)^{\frac{p+1}{2}}
     \end{align*}
for some constants $C_1, C_2>0$, due to Remark \ref{l1}.
Based on the geometry of the function 
$ f(t) = \frac{1}{2} t^2 - \lambda C_1 {t}^{\frac{q+1}{2}}
 - C_2 {t}^{\frac{p+1}{2}}$, the existence of two solutions of \eqref{1} can be established as- the first solution through minimization technique while the second through the mountain pass {T}heorem. This is a standard technique and one can refer \cite{MR0370183}, for details.

   \subsection{Proof of Theorem \ref{T1.1}}
 We split the proof into various Lemmas and conclude the final result at the end of this subsection.  We recall that {$J_{\lambda}$ is well defined and  differentiable on ${\mathcal{X}^{1,2}(D)}$ and for any $\varphi\in {\mathcal{X}^{1,2}(D)}$}, 
$$J'_{\lambda}(u)(\varphi):= \int_{\Omega} \nabla u\cdot\nabla \varphi~ dx+ \frac{C_{n,s}}{2}\int_{D_{\Omega}} \frac{(u(x)-u(y))(\varphi(x)-\varphi(y))}{|x-y|^{n+2s}} \, dx\,dy -{\lambda}\int_{\Omega}u^{q}\varphi \,dx - \int_{\Omega}u^{p}\varphi \,dx\\.$$
Hence, critical points of the functional $J_{\lambda}$ are solutions to the problem \eqref{1}.
\begin{lemma} \label{le3.1}
Let 
$ \Lambda = \sup\{ \lambda>0 : \text{Problem \eqref{1} has a solution}\}~ \text{then} ~0<\Lambda<\infty.$
\end{lemma}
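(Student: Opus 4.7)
The statement has two independent halves: $\Lambda > 0$, meaning the solution set is nonempty for some $\lambda > 0$, and $\Lambda < \infty$, providing an upper bound above which no solution exists.

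For $\Lambda > 0$ I would use direct minimization of $J_\lambda$ on a small ball of $\mathcal{X}^s(D)$. Starting from the coercivity estimate displayed just before the statement, the comparison function $f(t) = \tfrac{1}{2} t^2 - \lambda C_1 t^{q+1} - C_2 t^{p+1}$ exhibits, for all sufficiently small $\lambda > 0$, a strictly positive local maximum at some $\rho > 0$ and a strictly negative local minimum inside $(0,\rho)$. This lets me fix $\rho>0$ with $\inf_{\eta(u) = \rho} J_\lambda(u) > 0$, while testing $J_\lambda(t\phi)$ on any nonnegative $\phi \in \mathcal{X}^s(D) \setminus \{0\}$ for small $t>0$ gives $\inf_{\overline{B_\rho}} J_\lambda < 0$. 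Since $\mathcal{X}^s(D)$ is a Hilbert space (Proposition \ref{p2.2}) and the nonlinearities are subcritical, a minimizer $u_\lambda$ is attained in $\overline{B_\rho}$; it lies in the interior and is therefore a free critical point of $J_\lambda$, hence a weak solution. Replacing $u_\lambda$ by $|u_\lambda|$ does not raise the functional, so I may assume $u_\lambda \geq 0$, and then Proposition \ref{new-prop} applied to $\mathcal{L}u_\lambda = \lambda u_\lambda^q + u_\lambda^p \geq 0$, $\not\equiv 0$, upgrades this to $u_\lambda > 0$ in $\Omega$.

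For $\Lambda < \infty$, I introduce the first eigenpair $(\lambda_1, \phi_1)$ of $\mathcal{L}$ under the same homogeneous mixed boundary conditions, obtained by minimizing the Rayleigh quotient $\eta(v)^2/\|v\|_{L^2(\Omega)}^2$ over $\mathcal{X}^s(D) \setminus \{0\}$; one can take $\phi_1 > 0$ in $\Omega$ via Proposition \ref{new-prop} and $\phi_1 \in L^\infty$ via Lemma \ref{l2.8}. If $u$ is any weak solution of $(P_\lambda)$ then $\mathcal{L}u = \lambda u^q + u^p \geq 0$ with $u > 0$, so the Picone-type inequality (Theorem \ref{T2.12}) with $v = \phi_1$, noting that the boundary integrals vanish since $u$ and $\phi_1$ share the same homogeneous conditions, gives
\[
\int_\Omega \phi_1^2 \bigl( \lambda u^{q-1} + u^{p-1}\bigr)\, dx \leq \eta(\phi_1)^2 = \lambda_1 \|\phi_1\|_{L^2(\Omega)}^2.
\]
An elementary calculus check shows $\min_{t > 0}\bigl(\lambda t^{q-1} + t^{p-1}\bigr) = C_{p,q}\lambda^{(p-1)/(p-q)}$ for an explicit $C_{p,q} > 0$ depending only on $p,q$. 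Substituting yields $C_{p,q}\lambda^{(p-1)/(p-q)} \leq \lambda_1$, hence $\lambda \leq (\lambda_1/C_{p,q})^{(p-q)/(p-1)}$, which bounds $\Lambda$ from above.

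The main obstacle is the construction and qualitative description of $(\lambda_1, \phi_1)$ in this mixed boundary setting: existence and minimality of $\lambda_1$ follow from the standard Rayleigh-quotient argument using the Hilbert structure of $\mathcal{X}^s(D)$ and the compact embedding into $L^2(\Omega)$ inherited from Lemma \ref{l1}, but the strict positivity of $\phi_1$ inside $\Omega$ must be drawn from the maximum principle in Proposition \ref{new-prop}. Granted this, $\Lambda > 0$ is a textbook local-minimization argument and $\Lambda < \infty$ reduces to the Picone inequality already proved in Theorem \ref{T2.12} combined with the elementary pointwise minimization above.
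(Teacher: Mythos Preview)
Your argument is correct, and it differs from the paper's in both halves. For $\Lambda<\infty$, both routes rest on the Picone inequality (Theorem~\ref{T2.12}), but the paper first solves the auxiliary sublinear problem $\mathcal L Z=Z^{q}$, rescales to $\bar Z_{\lambda}=\lambda^{1/(1-q)}Z$, and uses the Brezis--Kamin comparison (Lemma~\ref{l2.13}) to get $\bar Z_{\lambda}\le \bar u_{\lambda}$; dropping the concave term then yields $\lambda^{(p-1)/(1-q)}\le \Lambda^{*}:=\inf_{\Phi}\eta(\Phi)^{2}/\int_{\Omega}Z^{p-1}\Phi^{2}$. Your approach bypasses the auxiliary problem and the comparison principle entirely: you keep both terms, use the elementary pointwise bound $\lambda t^{q-1}+t^{p-1}\ge C_{p,q}\lambda^{(p-1)/(p-q)}$, and test against the first eigenfunction. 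This is lighter and more self-contained, at the cost of setting up the eigenpair $(\lambda_{1},\phi_{1})$; the paper's version, by contrast, reuses $Z$ (which is needed anyway in Lemma~\ref{ll3.2} as a subsolution). For $\Lambda>0$, the paper's proof of this lemma does not actually establish it---existence of a solution for small $\lambda$ is taken for granted and only surfaces implicitly through the sub/supersolution machinery later. Your local-minimization argument on $\overline{B_{\rho}}$ is a clean way to fill this, valid when $p+1<2^{*}$ so that the embedding $\mathcal X^{s}(D)\hookrightarrow L^{p+1}(\Omega)$ is compact; in the critical case $p=2^{*}-1$ one should instead fall back on the sub/supersolution scheme (e.g.\ $\bar Z_{\lambda}$ below a suitable multiple of the torsion function for $\lambda$ small), since compactness of minimizing sequences is no longer automatic.
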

\begin{proof}
We first prove that $\Lambda>0$ which is equivalent to showing $(P_\lambda)$  has a solution for some $\lambda>0$. As discussed in Section $3$, taking $g\equiv 1$, we know that \eqref{2.6} has a solution $W\in \mathcal{X}^{1,2}(D)$. Moreover since $g\equiv 1\in L^\infty (D)$, we can apply Lemma \ref{l2.8} to conclude that $W\in L^\infty(D)\subset L^\infty(\Omega)$. Therefore we can find a $\lambda_0>0$ such that for all $\lambda \in (0,\lambda_0]$, there exists a $M=M(\lambda)>0$ such that 
\[M\geq \lambda M^q\|W\|_{L^\infty(\Omega)}^q + M^p\|W\|_{L^\infty(\Omega)}^p.\]
Hence, $MW$ forms a supersolution of \eqref{1}. Now we recall $0<\phi_1 \in L^\infty(\Omega)$ as eigenfunction corresponding to first eigenvalue $\lambda_1(D)$ of $\mathcal{L}$ with mixed boundary, proved in \cite{}. For sufficiently small $\epsilon>0$, one can show that $\epsilon \phi_1$ is a subsolution of \eqref{1} such that $MW>\epsilon \phi_1$ in $D$. Hence we apply Lemma \ref{l2.7} to obtain a solution $u \in \mathcal{X}^{1,2}(D)$ of $(P_\lambda)$ such that $\epsilon \phi_1 \leq u\leq MW$. This proves that $\Lambda >0$.

    We now prove that $\Lambda<\infty$, so suppose $\lambda\in \Lambda$ and $\bar{u}$ is a solution to $\eqref{1}$. We consider the following problem
    \begin{equation} \label{eq3.1}
    \left\{\begin{split} \mathcal{L}u\: &= u^q,~~u>0~ \text{in} ~\Omega, \\
      u&=0~~\text{in} ~~{D^c},\\
 \mathcal{N}_s(u)&=0 ~~\text{in} ~~{\Pi_2}, \\
 \frac{\partial u}{\partial \nu}&=0 ~~\text{in}~~ \partial \Omega \cap \overline{\Pi_2}.
    \end{split} \right.
\end{equation}
    where $q\in (0,1)$.
 To find a solution to problem \eqref{eq3.1}, we consider the following minimization problem 
 $$
 \mathcal{M}= \min\limits_{w\in {\mathcal{X}^{1,2}(D)}}\left\{ \frac{1}{2}\eta(w)^2-\frac{1}{q+1}\int_{\Omega}(w^+)^{q+1}\,dx \right\}.
 $$
{It is easy to see that $\mathcal{M}$ is achieved by a minimizer $Z\in \mathcal{X}^{1,2}(D)$. Let us write $Z=Z^+-Z^-$, where $Z^\pm=\max\{\pm Z,0\}$. Then by using $Z$ is a solution to problem \eqref{eq3.1} and testing with $-Z^-$ together with the following inequality
$$|Z^-(x)-Z^-(y)|^2\leq (Z(x)-Z(y))( -Z^-(x)+Z^-(y))~\text{for a.e.}~x,y\in \mathbb{R}^n,$$
one can easily obtain $Z\geq 0$ a.e. in $\mathbb{R}^n$. Now we can use Corollary \ref{p2.11} and Lemma \ref{l2.13} to obtain $Z > 0$ a.e. in $\mathbb{R}^n$ and it is unique}. In addition, by using Theorem \ref{t2.10}, we have $Z\in L^{\infty}({D})$. Now
we set $\bar{Z}_{\lambda} = \lambda^{\frac{1}{1-q}}Z$ for $\lambda>0$ and find that $\bar{Z}_{\lambda}$ solves
\begin{equation} \label{eq4.2}
    \left\{\begin{split} \mathcal{L}\bar{Z}_{\lambda}\: &= \lambda {\bar{Z}_{\lambda}}^q,~~ \bar{Z_{\lambda}}>0~\text{in} ~\Omega, \\
      \bar{Z_{\lambda}}&=0~~\text{in} ~~{D^c},\\
 \mathcal{N}_s(\bar{Z_{\lambda}})&=0 ~~\text{in} ~~{\Pi_2}, \\
 \frac{\partial \bar{Z_{\lambda}}}{\partial \nu}&=0 ~~\text{in}~~ \partial \Omega \cap \overline{\Pi_2}.
    \end{split} \right.
\end{equation}
  By {L}emma \ref{l2.13}, we get $\bar{Z}_{\lambda}\leq \bar{u}$ a.e. in $\R^n$. Now suppose $\Phi \in {\mathcal{X}^{1,2}(D)}$, then  using Theorem \ref{T2.12} and the fact that $\bar{u}$ solves \eqref{1}, we can write
\begin{align*}
     \int_{\Omega}|\nabla \Phi|^2~ dx+\frac{C_{n,s}}{2}\int_{D_{\Omega}} {\dfrac{(\Phi(x)-\Phi(y))^2}{|x-y|^{n+2s}}} ~ dx dy &\geq \int_{\overline{\Pi_2}\cap\partial\Omega}\frac{\Phi^2}{\bar{u}} \frac{\partial \bar{u}}{\partial \nu}\,d{\sigma}  + \int_{\Pi_2}\frac{\Phi^2}{\bar{u}} \mathcal{N}_s \bar{u} \,dx +\int_{\Omega} \frac{\Phi^2}{\bar{u}} \mathcal{L}\bar{u}~ dx\\
    &{= \int_{\Omega} {\Phi}^2 (\lambda {\bar{u}}^{q-1}+ {\bar{u}^{p-1}})~ dx\geq \int_{\Omega} {{\bar{Z}_{\lambda}}}^{p-1} \Phi^2 ~dx}= \lambda^{\frac{p-1}{1-q}}\int_{\Omega} {{Z}}^{p-1}\Phi^2~ dx.
 \end{align*}
 Since $\Phi$ was arbitrary, we conclude that,
 $$ \lambda^{\frac{p-1}{1-q}}\leq \inf_{\Phi\in {\mathcal{X}^{1,2}(D)}}\frac{ \int_{\Omega}|\nabla \Phi|^2~ dx+\frac{C_{n,s}}{2}\int_{D_{\Omega}} {\dfrac{(\Phi(x)-\Phi(y))^2}{|x-y|^{n+2s}}} ~ dx dy }{{\int_{\Omega} Z^{p-1}\Phi^2~ dx}}= \Lambda^*.$$
 Hence we observe that  ${0<\lambda< \Lambda^*}^{\frac{1-q}{p-1}}<\infty$ which finishes the proof.
 \end{proof}
\begin{lemma}\label{ll4.2}
Let 
\begin{equation}\label{eq3}
 \mathcal{S} = \{\lambda > 0 :  \text{Problem \eqref{1} has a solution\}},
 \end{equation}
then $\mathcal S$ is an interval.
\end{lemma}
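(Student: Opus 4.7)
The plan is to prove that if $\lambda_0\in\mathcal S$ then every $\lambda\in(0,\lambda_0)$ also belongs to $\mathcal S$; this is precisely the statement that $\mathcal S$ is an interval (since it is a subset of $(0,\infty)$). The strategy is the classical sub--supersolution method, leaning on the existence result Lemma \ref{l2.7} and the Brezis--Kamin type comparison Lemma \ref{l2.13}.

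Fix $\lambda_0\in\mathcal S$ and a solution $u_{\lambda_0}$ of \eqref{1}. First I will exhibit a supersolution for $(P_\lambda)$ at level $\lambda<\lambda_0$: since $u_{\lambda_0}>0$ in $\Omega$,
\[
\mathcal L u_{\lambda_0}=\lambda_0 u_{\lambda_0}^{q}+u_{\lambda_0}^{p}\ \geq\ \lambda u_{\lambda_0}^{q}+u_{\lambda_0}^{p}\qquad\text{in }\Omega,
\]
while $u_{\lambda_0}$ automatically satisfies the mixed boundary prescriptions of \eqref{1} (since the boundary data do not depend on $\lambda$). Hence $u_{\lambda_0}$ is a supersolution of $(P_\lambda)$.

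Next I will build a subsolution. Using $\bar Z_{\lambda}=\lambda^{1/(1-q)}Z$ exactly as constructed in the proof of Lemma \ref{le3.1}, where $Z\in\mathcal X^s(D)\cap L^\infty(\Omega)$ is the (positive) solution of the purely concave problem \eqref{eq3.1}, we have $\bar Z_\lambda>0$ in $\Omega$ and $\mathcal L\bar Z_\lambda=\lambda\bar Z_\lambda^{q}\leq \lambda\bar Z_\lambda^{q}+\bar Z_\lambda^{p}$, with the required boundary conditions built in. So $\bar Z_\lambda$ is a subsolution of $(P_\lambda)$.

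The main obstacle, and the only genuinely nontrivial point, is to show the ordering $\bar Z_\lambda\leq u_{\lambda_0}$ a.e.\ in $\Omega$. Here I apply Lemma \ref{l2.13} with the choice $g(x,\varrho)=\lambda\varrho^{q}$; the quotient $g(x,\varrho)/\varrho=\lambda\varrho^{q-1}$ is strictly decreasing in $\varrho$ because $q<1$. Since
\[
\mathcal L u_{\lambda_0}=\lambda_0 u_{\lambda_0}^{q}+u_{\lambda_0}^{p}\geq \lambda u_{\lambda_0}^{q}=g(x,u_{\lambda_0}),\qquad \mathcal L\bar Z_\lambda=\lambda\bar Z_\lambda^{q}=g(x,\bar Z_\lambda),
\]
and both functions are positive in $\Omega$, Lemma \ref{l2.13} yields $\bar Z_\lambda\leq u_{\lambda_0}$ in $\Omega$. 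With the ordered pair of sub- and supersolutions in hand, Lemma \ref{l2.7} produces a weak solution $u_\lambda\in\mathcal X^s(D)$ of \eqref{1} satisfying $0<\bar Z_\lambda\leq u_\lambda\leq u_{\lambda_0}$, so $\lambda\in\mathcal S$. This shows $\mathcal S$ is an interval and, combined with Lemma \ref{le3.1}, confirms $\mathcal S\subset(0,\Lambda]$ contains $(0,\Lambda)$.
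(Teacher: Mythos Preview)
Your proof is correct and uses the same overall machinery as the paper (the sub--supersolution existence Lemma~\ref{l2.7} together with the Brezis--Kamin comparison Lemma~\ref{l2.13}), but the implementation differs. The paper fixes \emph{two} points $\mu_1<\mu_2$ in $\mathcal S$ and takes the corresponding solutions $v_{\mu_1}$, $v_{\mu_2}$ as an ordered sub/super pair for every intermediate $(P_\mu)$, thereby proving convexity of $\mathcal S$. You instead fix a single $\lambda_0\in\mathcal S$, use $u_{\lambda_0}$ as supersolution, and take the concave auxiliary function $\bar Z_\lambda$ from Lemma~\ref{le3.1} as subsolution, which shows directly that $(0,\lambda_0]\subset\mathcal S$. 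Your route buys two things: it gives the slightly stronger conclusion that $\mathcal S$ is an interval with left endpoint $0$, and your call to Lemma~\ref{l2.13} with $g(x,\varrho)=\lambda\varrho^{q}$ is cleaner, since here $g(x,\varrho)/\varrho=\lambda\varrho^{q-1}$ is genuinely decreasing, whereas the full nonlinearity $\mu\varrho^{q}+\varrho^{p}$ does not have a monotone quotient and the paper's comparison of $v_{\mu_1}$ with $v_{\mu_2}$ via Lemma~\ref{l2.13} requires exactly the same reduction you made explicit.
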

\begin{proof}
 Let $\mu_1\in S$ be fixed, we have to prove that for all $0<\mu_2<\mu_1$, problem $(P_{\mu_2})$ has a nontrivial solution. As we know $\mu_1\in S$, then we get the existence of $v_{\mu_1}\in \mathcal{X}^{1,2}(D) $ such that $v_{\mu_1}$ solves $(P_{\mu_1})$. Clearly, $v_{\mu_1}$ is a super-solution to problem $(P_{\mu_2})$. Recalling, $Z$ is the unique solution to problem \eqref{eq3.1}, setting $\zeta= \mu_2^\frac{1}{1-q}Z$, we can find that $\zeta$ solves the following problem
\begin{equation} 
    \left\{\begin{split} \mathcal{L}\zeta\: &= \mu_2 \zeta^q,~~ \zeta>0~\text{in} ~\Omega, \\
      \zeta&=0~~\text{in} ~~{D^c},\\
 \mathcal{N}_s(\zeta)&=0 ~~\text{in} ~~{\Pi_2}, \\
 \frac{\partial \zeta}{\partial \nu}&=0 ~~\text{in}~~ \partial \Omega \cap \overline{\Pi_2}.
    \end{split} \right.
\end{equation}
From Lemma \eqref{l2.13}, it holds $\zeta\leq v_{\mu_1}$ a.e. in $\mathbb R^n$. Since $\zeta$ is a subsolution to problem $(P_{\mu_2})$, then using a monotonicity argument we get the existence of solution $u\in \mathcal{X}^{1,2}(D)$ such that $\zeta\leq u\leq v_{\mu_1}$ a.e. in $\R^n$, and $u$ solves $(P_{\mu_2})$. Hence, $\mu_2\in S$ and the result holds.
\end{proof}

Let us look at the energy properties of positive solutions of \eqref{1}.
\begin{lemma}\label{ll3.2}
If problem $(P_{\lambda})$ has a positive solution  for  $0<\lambda<\Lambda,$ then it has a minimal solution $u_{\lambda}$ such that $J(u_{\lambda})<0$, for all $0<\lambda<\Lambda.$ 
Furthermore, the family of minimal solutions $u_{\lambda}$ is increasing with respect to $\lambda.$
\end{lemma}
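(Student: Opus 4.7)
The plan is to build $u_\lambda$ as the strongly convergent limit of a monotone Picard iteration sandwiched between an explicit subsolution and any given positive solution of $(P_\lambda)$, and then to track the energy $J_\lambda$ along that iteration. Fix $\lambda\in(0,\Lambda)$ and let $\bar{u}_\lambda$ denote the positive solution of $(P_\lambda)$ assumed to exist; it is trivially a supersolution. For the subsolution I would take $\bar{Z}_\lambda=\lambda^{1/(1-q)}Z$ from the proof of Lemma~\ref{le3.1}, which solves $\mathcal{L}\bar{Z}_\lambda=\lambda\bar{Z}_\lambda^{q}$ with the mixed boundary data of $(P_\lambda)$ and hence satisfies $\mathcal{L}\bar{Z}_\lambda\leq \lambda\bar{Z}_\lambda^{q}+\bar{Z}_\lambda^{p}$. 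Lemma~\ref{l2.13} applied with $g(x,\varrho)=\lambda\varrho^{q}$ (for which $g(x,\varrho)/\varrho=\lambda\varrho^{q-1}$ is decreasing since $q<1$), together with the supersolution inequality $\mathcal{L}\bar{u}_\lambda\geq \lambda\bar{u}_\lambda^{q}$, then delivers $\bar{Z}_\lambda\leq \bar{u}_\lambda$ a.e.\ in $\Omega$.

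Setting $u_0=\bar{Z}_\lambda$, I would define the Picard iterates $u_{n+1}\in\mathcal{X}^s(D)$ as the unique Lax--Milgram solution of $\mathcal{L}u_{n+1}=\lambda u_n^{q}+u_n^{p}$ in $\Omega$ with the mixed boundary conditions of $(P_\lambda)$. Since $\mathcal{L}(u_1-u_0)=u_0^{p}\geq 0$ in $\Omega$ and $u_1-u_0\equiv 0$ outside $\Omega$, Proposition~\ref{new-prop} gives $u_1\geq u_0$; the monotonicity of $t\mapsto \lambda t^{q}+t^{p}$ on $[0,\infty)$ propagates this through induction to yield $u_n\leq u_{n+1}\leq \bar{u}_\lambda$ for every $n$. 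Hence $u_n\uparrow u_\lambda$ pointwise. Testing the equation against $u_{n+1}$, using the uniform $L^\infty$ bound by $\|\bar{u}_\lambda\|_\infty$ and Lemma~\ref{l1}, shows $\sup_n\eta(u_n)<\infty$, so $u_n\rightharpoonup u_\lambda$ weakly in $\mathcal{X}^s(D)$. Lemma~\ref{l2.14}, applicable since $\mathcal{L}u_n\geq 0$ and $u_n\leq u_\lambda$, then upgrades this to strong convergence, so $u_\lambda$ is a weak solution of $(P_\lambda)$. For minimality, any positive solution $v$ of $(P_\lambda)$ satisfies $\bar{Z}_\lambda\leq v$ by the same application of Lemma~\ref{l2.13}; induction then gives $u_n\leq v$ for all $n$, hence $u_\lambda\leq v$.

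The main obstacle is proving $J_\lambda(u_\lambda)<0$, since testing $(P_\lambda)$ against $u_\lambda$ itself leaves a positive $p$-term that resists a direct sign conclusion. My strategy is to show $J_\lambda$ is non-increasing along the iteration. Since $u_{n+1}$ is the unique minimizer on $\mathcal{X}^s(D)$ of the strictly convex quadratic $I_n(v):=\tfrac{1}{2}\eta(v)^2-\int_{\Omega}(\lambda u_n^{q}+u_n^{p})v\,dx$, the inequality $I_n(u_{n+1})\leq I_n(u_n)$ rearranges to
\begin{equation*}
\tfrac{1}{2}\eta(u_{n+1})^2-\tfrac{1}{2}\eta(u_n)^2\leq\int_{\Omega}(\lambda u_n^{q}+u_n^{p})(u_{n+1}-u_n)\,dx.
\end{equation*}
Convexity of $t\mapsto t^{q+1}$ and $t\mapsto t^{p+1}$ on $[0,\infty)$ (both exponents exceed $1$), together with $0\leq u_n\leq u_{n+1}$, yields
\begin{equation*}
\tfrac{\lambda}{q+1}\bigl(\|u_{n+1}\|^{q+1}_{q+1}-\|u_n\|^{q+1}_{q+1}\bigr)+\tfrac{1}{p+1}\bigl(\|u_{n+1}\|^{p+1}_{p+1}-\|u_n\|^{p+1}_{p+1}\bigr)\geq \int_{\Omega}(\lambda u_n^{q}+u_n^{p})(u_{n+1}-u_n)\,dx.
\end{equation*}
Subtracting gives $J_\lambda(u_{n+1})\leq J_\lambda(u_n)$. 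A direct computation using $\bar{Z}_\lambda$ as a test function in its own equation gives $\eta(\bar{Z}_\lambda)^2=\lambda\|\bar{Z}_\lambda\|^{q+1}_{q+1}$, whence
\begin{equation*}
J_\lambda(\bar{Z}_\lambda)=-\tfrac{\lambda(1-q)}{2(q+1)}\|\bar{Z}_\lambda\|^{q+1}_{q+1}-\tfrac{1}{p+1}\|\bar{Z}_\lambda\|^{p+1}_{p+1}<0.
\end{equation*}
Passing to the limit along $u_n\to u_\lambda$ (strong in $\mathcal{X}^s(D)$) yields $J_\lambda(u_\lambda)\leq J_\lambda(\bar{Z}_\lambda)<0$.

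Finally, for monotonicity in $\lambda$, suppose $\lambda_1<\lambda_2<\Lambda$. Then $u_{\lambda_2}$ satisfies $\mathcal{L}u_{\lambda_2}=\lambda_2 u_{\lambda_2}^{q}+u_{\lambda_2}^{p}\geq \lambda_1 u_{\lambda_2}^{q}+u_{\lambda_2}^{p}$, making it a supersolution of $(P_{\lambda_1})$. Running the above construction for $\lambda_1$ with $u_{\lambda_2}$ in place of $\bar{u}_{\lambda_1}$ as the upper barrier produces the minimal solution $u_{\lambda_1}\leq u_{\lambda_2}$. Strict inequality follows from Proposition~\ref{new-prop} applied to $w:=u_{\lambda_2}-u_{\lambda_1}\geq 0$, which satisfies $\mathcal{L}w\geq (\lambda_2-\lambda_1)u_{\lambda_2}^{q}>0$ in $\Omega$ and $w\equiv 0$ outside $\Omega$, so $w$ cannot vanish at any interior point.
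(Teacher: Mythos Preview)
Your construction of the minimal solution---monotone iteration from $\bar Z_\lambda$ up to an arbitrary solution $\bar u_\lambda$, boundedness, and strong convergence via Lemma~\ref{l2.14}---matches the paper's. The monotonicity in $\lambda$ is also argued the same way.

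The substantive difference is in proving $J_\lambda(u_\lambda)<0$. The paper defers this to the next lemma, where it shows that the minimal solution is \emph{semi-stable}: the first eigenvalue of $\mathcal L-(\lambda q\,u_\lambda^{q-1}+p\,u_\lambda^{p-1})$ with the mixed boundary data is non-negative, i.e.\ $\eta(\varphi)^2\geq \int_\Omega(\lambda q\,u_\lambda^{q-1}+p\,u_\lambda^{p-1})\varphi^2$ for all $\varphi$. Taking $\varphi=u_\lambda$ and combining with $J_\lambda'(u_\lambda)=0$ then forces $J_\lambda(u_\lambda)<0$. Your route---showing $J_\lambda$ is non-increasing along the Picard iteration (via the quadratic minimality of $u_{n+1}$ and convexity of $t\mapsto t^{q+1},t^{p+1}$) and computing $J_\lambda(\bar Z_\lambda)<0$ directly---is correct and more elementary, since it bypasses the spectral argument entirely. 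The paper's approach, on the other hand, yields semi-stability as a byproduct, which is of independent interest and is a standard structural property of minimal branches.

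One small slip: you write ``$u_1-u_0\equiv 0$ outside $\Omega$'' and ``$w\equiv 0$ outside $\Omega$'', but elements of $\mathcal X^s(D)$ vanish only on $\Pi_1$, not on $\Pi_2$. Proposition~\ref{new-prop} only requires non-negativity outside $\Omega$, which is what actually holds; for the iteration step, first obtain $u_{n+1}\geq u_n$ in $\Omega$ by testing the linear equation $\mathcal L(u_{n+1}-u_n)\geq 0$ against $(u_{n+1}-u_n)_-$ (as the paper notes after the weak-solution definition in Section~3.1), and then the nonlocal Neumann representation on $\Pi_2$ propagates the inequality to $\mathbb R^n\setminus\Omega$. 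With that correction, your argument goes through.
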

\begin{proof}
Let us reconsider 
$\bar{Z}_{\lambda}$ which solves \eqref{eq4.2}. We already know that for each $\lambda \in (0, \Lambda)$, there exists a positive solution $w_\lambda$ of \eqref{1}, by definition of $\Lambda$. Since $\mathcal{L}\bar{Z}_{\lambda}= {\lambda{\bar{Z}_{\lambda}}^q}$ in $\Omega$ and $\mathcal{L}w_\lambda \geq \lambda w_\lambda^q$ in $\Omega$, so we use {L}emma \ref{l2.13} with $v= w_\lambda$ to obtain that  $w_\lambda\geq \bar{Z_{\lambda}}$ a.e. in $\R^n$.
Thus, ${\bar{Z}_{\lambda}}$ is a subsolution of \eqref{1}. We set  $u_0=\bar{Z}_{\lambda}$ and consider the monotone iteration, where $u_n$ solves the following problem
\begin{equation} \label{e3.5}
    \left\{\begin{split} \mathcal{L}u_n\: &= \lambda u^q_{n-1}+ u^p_{n-1}~~ {u_n}>0~\text{in} ~\Omega, \\
      u_n&=0~~\text{in} ~~{D^c},\\
 \mathcal{N}_s(u_n)&=0 ~~\text{in} ~~{\Pi_2}, \\
 \frac{\partial u_n}{\partial \nu}&=0 ~~\text{in}~~ \partial \Omega \cap \overline{\Pi_2}.
    \end{split} \right.
\end{equation}
{Then $\{u_n\}$ is an increasing sequence and satisfies 
 $\bar{Z}_{\lambda}\leq\dots\leq u_{n-1}\leq u_n\leq w_{\lambda}$ a.e. in $\R^n$, for each $n$. So, by  Corollary \ref{p2.11}, it follows that $\bar{Z}_{\lambda}<u_n<w_{\lambda}$ a.e. in $\R^n$, for each $n$.
Due to this, taking $u_n$ as a test function in \eqref{e3.5}, one can obtain $\eta(u_n)\leq \eta(w_{\lambda})$.}
Hence, there {exists} some $u_{\lambda}\in {\mathcal{X}^{1,2}(D)}$ such that $u_n\rightharpoonup u_{\lambda}$ weakly in ${\mathcal{X}^{1,2}(D)}$ and pointwise a.e. as $n \to \infty.$ {So we can use {L}emma \ref{l2.14}, since $\mathcal{L} u_n\geq 0$ in $\Omega$ and $u_n\leq u_{\lambda}$ a.e. in $\R^n$, then we obtained that sequence $u_n \to u_{\lambda}$ strongly converges in ${\mathcal{X}^{1,2}(D)}$ and also $u_{\lambda}\leq w_{\lambda}$} a.e. in $\R^n$. Since $w_\lambda$ is any weak solution of \eqref{1}, $u_{\lambda}$ must be the minimal solution of \eqref{1}.
Finally, applying {C}orollary \ref{p2.11} and {L}emma \ref{l2.13}, the monotonicity of the family $\{u_{\lambda}, \lambda\in (0,\Lambda)\}$ follows easily. We will later demonstrate, how this minimal solution $u_{\lambda}$  satisfies $J(u_{\lambda})<0$, from following Lemma \ref{l4.4} and Lemma \ref{l4.5}. 
\end{proof}
\begin{lemma}\label{l4.4}
Let $\bar{Z_{\lambda}}$ and $ w_{\lambda}$ are subsolution and supersolution of \eqref{1}, respectively such that $\bar{Z_{\lambda}}< w_{\lambda}$ a.e. in $\R^n$. {Also let} $u_{\lambda}$ be a minimal solution of \eqref{1} such that 
 $\bar{Z_{\lambda}}\leq u_{\lambda}\leq w_{\lambda}$ a.e. in $\R^n$. Then ${\lambda^*}=\eta_1\geq 0$, where $\eta_1$ is {the} first eigenvalue of $(\mathcal{L}-b(x))$ with mixed boundary data as in $(P_\lambda)$ and $b(x) = \lambda q u^{q-1}_{\lambda}+ p u^{p-1}_{\lambda}.$ 
 \end{lemma}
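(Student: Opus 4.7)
The plan is to argue by contradiction: assume $\eta_1:=\eta_1(\mathcal{L}-b(x))<0$, and produce a solution of \eqref{1} strictly below $u_\lambda$, contradicting the minimality guaranteed by Lemma \ref{ll3.2}.

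First I would set up the linear eigenvalue problem for $\mathcal{L}-b(x)$ with the prescribed mixed Dirichlet--Neumann boundary datum. Since $b(x)=\lambda q u_\lambda^{q-1}+p u_\lambda^{p-1}\in L^\infty(\Omega)$ (using that $u_\lambda\in L^\infty(\Omega)$ from Theorem \ref{t2.10} and that $\bar{Z}_\lambda\leq u_\lambda$ gives a positive lower bound in the interior), the bilinear form
\[
a(\phi,\psi)=\langle\phi,\psi\rangle-\int_\Omega b(x)\phi\psi\,dx
\]
on $\mathcal{X}^s(D)$ is bounded, symmetric, and weakly coercive modulo a compact lower-order term. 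Standard spectral theory (Riesz--Fredholm on the compact embedding $\mathcal{X}^s(D)\hookrightarrow L^2(\Omega)$ coming from Lemma \ref{l1}) then yields a smallest eigenvalue $\eta_1$ with an associated eigenfunction $\phi_1>0$ in $\Omega$, satisfying the same mixed boundary conditions as in \eqref{2.6}. Positivity of $\phi_1$ follows from Proposition \ref{new-prop} applied to $|\phi_1|$.

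Next, suppose for contradiction that $\eta_1<0$ and consider the one-parameter family $v_t:=u_\lambda-t\phi_1$ for $t>0$ small. A first-order Taylor expansion of $f(u):=\lambda u^q+u^p$ around $u_\lambda$ gives
\begin{align*}
\mathcal{L}v_t-f(v_t) &= \mathcal{L}u_\lambda-t\,\mathcal{L}\phi_1-f(u_\lambda)+t\,f'(u_\lambda)\phi_1+O(t^2) \\
 &= -t\bigl(b(x)\phi_1+\eta_1\phi_1\bigr)+t\,b(x)\phi_1+O(t^2) \\
 &= -t\eta_1\phi_1+O(t^2).
\end{align*}
Since $\eta_1<0$, the leading term $-t\eta_1\phi_1$ is strictly positive in $\Omega$, and the $O(t^2)$ remainder is controlled uniformly because $u_\lambda$ is bounded below by $\bar{Z}_\lambda>0$ on compact subsets of $\Omega$ (so $f$ is smooth along the path). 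Thus for all sufficiently small $t>0$, $v_t$ is a \emph{strict} supersolution of \eqref{1} respecting the mixed boundary conditions (the Dirichlet datum on $\Pi_1$ is preserved because $\phi_1\equiv 0$ there, and the Neumann/nonlocal-Neumann conditions carry over by linearity).

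The crux, and the step I expect to be the main obstacle, is to show that $\bar{Z}_\lambda\leq v_t$ in $\Omega$ for $t$ small, so that the sub/supersolution machinery of Lemma \ref{l2.7} can be deployed between $\bar{Z}_\lambda$ and $v_t$. One has
\[
\mathcal{L}(u_\lambda-\bar{Z}_\lambda)=u_\lambda^p+\lambda(u_\lambda^q-\bar{Z}_\lambda^q)>0 \quad \text{in }\Omega,
\]
so by the strong maximum principle (Proposition \ref{new-prop}) the difference $u_\lambda-\bar{Z}_\lambda$ is strictly positive in $\Omega$. To absorb $t\phi_1$ one needs, in addition, a comparable rate of vanishing of $u_\lambda-\bar{Z}_\lambda$ against $\phi_1$ near the Dirichlet boundary $\partial\Omega\cap\overline{\Pi_1}$; this is precisely a Hopf-type boundary estimate for the mixed operator $\mathcal{L}$ under mixed boundary data, which provides $u_\lambda-\bar{Z}_\lambda\geq c\,\phi_1$ for some $c>0$. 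Granting this, choosing $t\in(0,c)$ yields $\bar{Z}_\lambda\leq v_t<u_\lambda$ in $\Omega$, and Lemma \ref{l2.7} produces a weak solution $\tilde{u}$ of \eqref{1} with $\bar{Z}_\lambda\leq\tilde{u}\leq v_t<u_\lambda$, contradicting the minimality of $u_\lambda$. Therefore $\eta_1\geq 0$, as claimed.
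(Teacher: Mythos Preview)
Your proof follows essentially the same route as the paper: argue by contradiction, perturb $u_\lambda$ downward by a small multiple of the first eigenfunction to obtain a strict supersolution $u_\lambda-t\phi_1$, then trap a new solution between $\bar Z_\lambda$ and $u_\lambda-t\phi_1$ via Lemma~\ref{l2.7}, contradicting minimality.

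One technical refinement is worth noting. Where you invoke a generic second-order Taylor expansion of $f(u)=\lambda u^q+u^p$, the paper treats the sublinear part via the exact concavity inequality
\[
(u_\lambda-a\varphi)^q\le u_\lambda^q-aq\,u_\lambda^{q-1}\varphi,
\]
so that no remainder estimate is needed for the $q$-term. This matters because $f''(u)$ contains $\lambda q(q-1)u^{q-2}$, which blows up as $u\to 0$ near the Dirichlet part of the boundary; your $O(t^2)$ remainder is therefore not uniformly controlled there, even with the interior bound $u_\lambda\ge\bar Z_\lambda$ (note $\bar Z_\lambda\to 0$ on $\partial\Omega\cap\overline{\Pi_1}$). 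The concavity trick sidesteps this entirely, leaving only the $p$-term, for which the paper proceeds exactly as you do. Conversely, you are more explicit than the paper about the Hopf-type comparison $u_\lambda-\bar Z_\lambda\ge c\,\phi_1$ needed to guarantee $\bar Z_\lambda\le u_\lambda-t\phi_1$; the paper simply asserts this holds for $a$ small.
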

\begin{proof}
By contrary, we assume ${\lambda^*}<0$ and $\varphi>0$ is a eigenfunction of the following
\begin{equation}\label{new3}
    \left\{\begin{split} (\mathcal{L}-b(x))\varphi \: &= {\lambda^*} \varphi~~ \varphi \geq 0~\text{in} ~\Omega, \\
      \varphi&=0~~\text{in} ~~{D^c},\\
 \mathcal{N}_s(\varphi)&=0 ~~\text{in} ~~{\Pi_2}, \\
 \frac{\partial \varphi}{\partial \nu}&=0 ~~\text{in}~~ \partial \Omega \cap \overline{\Pi_2}.
    \end{split} \right.
\end{equation}
\textbf{Claim}: $u_{\lambda}-a\varphi$ is a supersolution of \eqref{1}, for any sufficiently small $a>0$.\\
By direct calculation, we get
\begin{align*}
\mathcal{L}(u_{\lambda}-a\varphi)-(\lambda(u_{\lambda}-a\varphi)^q+(u_{\lambda}-a\varphi)^p)
&=\lambda u_{\lambda}^q+u_{\lambda}^p-a{\lambda^*}\varphi-a(\lambda q u^{q-1}_{\lambda}+ p u^{p-1}_{\lambda})\varphi\\
&-\lambda(u_{\lambda}-a\varphi)^q-(u_{\lambda}-a\varphi)^p.
\end{align*}
{Since for $0<q<1$, $t\mapsto t^q$ is concave, so we have}
\begin{equation}\label{a}
(u_{\lambda}-a\varphi)^q\leq u_{\lambda}^q-aqu_{\lambda}^{q-1}\varphi.
\end{equation}
Now using above \eqref{a}, we obtain that in $\Omega$
\begin{align*}
\mathcal{L}(u_{\lambda}-a\varphi)-(\lambda(u_{\lambda}-a\varphi)^q+(u_{\lambda}-a\varphi)^p)
&\geq u_{\lambda}^p-a{\lambda^*}\varphi-a p u^{p-1}_{\lambda}\varphi
-(u_{\lambda}-a\varphi)^p \geq-a\lambda^* \varphi +o( a\varphi)>0,
\end{align*}
for $a>0$ small enough, since, ${\lambda^*}<0$ and $\varphi>0$.  
Hence, we can see $u_{\lambda}-a\varphi$ is a super solution of  \eqref{1}.
Furthermore, since $\bar{Z_{\lambda}}$ is not a solution of \eqref{1} so $u_{\lambda}\not \equiv\bar{Z_{\lambda}}$. Therefore for small enough value of $a>0$, one can get $u_{\lambda}-a\varphi\geq \bar{Z_{\lambda}}$ a.e. in $\R^n.$ Applying Lemma \ref{l2.7}, we infer that \eqref{1} has  another solution $u'_{\lambda}$ such that $\bar{Z_{\lambda}}\leq u'_{\lambda}\leq u_{\lambda}-a\varphi$  a.e. in $\R^n$, which is a contradiction to $u_{\lambda}$ being the minimal solution of \eqref{1}.
\end{proof}
\begin{lemma}\label{l4.5}
    If $u_{\lambda}$ is a minimal solution of \eqref{1}, for $0<\lambda<\Lambda$ then  $J_{\lambda}(u_{\lambda})<0.$
\end{lemma}
\begin{proof}
 From Lemma \ref{l4.4}, $\lambda^*\geq 0$. It is easy to see that $\lambda^*\geq 0$ is equivalent to
\begin{equation} \label{e3.7}
\eta(\varphi)^2\geq \int_{\Omega} b(x) {\varphi}^2 dx, ~~\forall~ \varphi\in \mathcal{X}^{1,2}(D). 
\end{equation}
 As we know $u_{\lambda}$ is a solution to \eqref{1} then $J'_{\lambda}(u_{\lambda})=0$ which gives
\begin{equation} \label{e3.8}
  \eta(u_{\lambda})^2= \lambda \|u_{\lambda}\|^{q+1}_{L^{q+1}(\Omega)}+ \|u_{\lambda}\|^{p+1}_{L^{p+1}(\Omega)}.
\end{equation}
From equation \eqref{e3.7}, it follows that
\begin{equation}
 \eta(u_{\lambda})^2- \lambda q~\|u_{\lambda}\|^{q+1}_{L^{q+1}(\Omega)}-p~ \|u_{\lambda}\|^{p+1}_{L^{p+1}(\Omega)}\geq 0.
 \end{equation}
Using the above relations in \eqref{ee1.1}, we can deduce that $J_{\lambda}(u_{\lambda})<0.$ Thus, with the help of this Lemma, proof of Lemma \ref{ll3.2} is now completed.
   \end{proof}
\begin{lemma}\label{proof13}
If $\lambda= \Lambda$ then  \eqref{1} has at least one solution.
\end{lemma}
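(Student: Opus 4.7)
The plan is to realize a solution at $\lambda = \Lambda$ as a monotone limit of minimal solutions of $(P_{\lambda_n})$ for an increasing sequence $\lambda_n \nearrow \Lambda$. First I would pick any sequence $\{\lambda_n\}\subset (0,\Lambda)$ with $\lambda_n \nearrow \Lambda$. By Lemma \ref{ll3.2}, for each $n$ there is a minimal positive solution $u_{\lambda_n}$ of $(P_{\lambda_n})$ satisfying $J_{\lambda_n}(u_{\lambda_n})<0$, and the family $\{u_{\lambda_n}\}$ is monotone increasing in $n$. This monotonicity and positivity will ultimately give the nontriviality $u_\Lambda \geq u_{\lambda_1}>0$ of the candidate limit.

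The key analytical step is a uniform bound on $\eta(u_{\lambda_n})$. Because $u_{\lambda_n}$ is a critical point, $\langle J'_{\lambda_n}(u_{\lambda_n}),u_{\lambda_n}\rangle = 0$, i.e.
\[
\eta(u_{\lambda_n})^2 = \lambda_n\|u_{\lambda_n}\|_{q+1}^{q+1} + \|u_{\lambda_n}\|_{p+1}^{p+1}.
\]
Using this to eliminate the $(p+1)$-term from $J_{\lambda_n}(u_{\lambda_n})$ yields
\[
J_{\lambda_n}(u_{\lambda_n}) = \Bigl(\tfrac{1}{2}-\tfrac{1}{p+1}\Bigr)\eta(u_{\lambda_n})^2 - \lambda_n\Bigl(\tfrac{1}{q+1}-\tfrac{1}{p+1}\Bigr)\|u_{\lambda_n}\|_{q+1}^{q+1},
\]
and the sign condition $J_{\lambda_n}(u_{\lambda_n})<0$ together with the embedding $\|u\|_{q+1}^{q+1}\le C\,\eta(u)^{q+1}$ from Lemma \ref{l1} forces
\[
\Bigl(\tfrac{1}{2}-\tfrac{1}{p+1}\Bigr)\eta(u_{\lambda_n})^{\,1-q} \le C\,\Lambda\Bigl(\tfrac{1}{q+1}-\tfrac{1}{p+1}\Bigr).
\]
Since $1-q>0$, this gives a uniform bound $\eta(u_{\lambda_n})\le M$ independent of $n$.

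From here, up to a subsequence $u_{\lambda_n}\rightharpoonup u_\Lambda$ weakly in $\mathcal{X}^s(D)$, and by monotonicity $u_{\lambda_n}\to u_\Lambda$ pointwise a.e. with $u_{\lambda_n}\le u_\Lambda$. Since $\mathcal{L}u_{\lambda_n} = \lambda_n u_{\lambda_n}^q + u_{\lambda_n}^p \ge 0$ in $\Omega$, Lemma \ref{l2.14} applies directly and upgrades the weak convergence to strong convergence $u_{\lambda_n}\to u_\Lambda$ in $\mathcal{X}^s(D)$. In the subcritical range $p<\frac{n+2}{n-2}$ this also provides strong $L^{q+1}$ and $L^{p+1}$ convergence via compact embeddings, so one can pass to the limit in the weak formulation \eqref{dd2.2} with $\lambda=\lambda_n$ against any test function $\varphi\in C_c^\infty(\Omega\cup\Pi_2)$, concluding that $u_\Lambda$ solves $(P_\Lambda)$ weakly. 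Positivity of $u_\Lambda$ in $\Omega$ follows from $u_\Lambda \ge u_{\lambda_1}>0$, and the boundary conditions are encoded in $u_\Lambda \in \mathcal{X}^s(D)$.

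The main obstacle is obtaining the uniform a priori bound on $\eta(u_{\lambda_n})$ as $\lambda_n\nearrow\Lambda$; this is where the energy information $J_{\lambda_n}(u_{\lambda_n})<0$ for the \emph{minimal} solution is essential, and where the concavity $0<q<1$ enters (through $1-q>0$) to absorb the lower-order term. Once that bound is in hand, the compactness Lemma \ref{l2.14}, which is tailored to monotone sequences with $\mathcal{L}u_n\ge 0$, performs the strong convergence automatically and makes the limit argument essentially mechanical.
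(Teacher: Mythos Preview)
Your argument is correct and follows the paper's strategy closely up to the uniform bound on $\eta(u_{\lambda_n})$: both use $J_{\lambda_n}(u_{\lambda_n})<0$ combined with the Euler--Lagrange identity to control the norm via a $\eta(u_{\lambda_n})^{1-q}$ inequality. The difference lies in the limit passage. The paper stops at weak convergence $u_n\rightharpoonup u_0$ plus local $L^r$ compactness, and then handles the nonlinear terms $\int_\Omega u_n^q\varphi$ and $\int_\Omega u_n^p\varphi$ by Vitali's convergence theorem, using the uniform $L^{2^*}$ bound to establish equi-integrability. You instead exploit the monotonicity $u_{\lambda_n}\le u_\Lambda$ together with $\mathcal{L}u_{\lambda_n}\ge 0$ to invoke Lemma~\ref{l2.14} and upgrade to \emph{strong} convergence in $\mathcal{X}^s(D)$, after which the nonlinear terms converge by continuous Sobolev embedding. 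Your route is arguably cleaner here since the compactness lemma was built precisely for such monotone situations, and it even dispenses with the subcritical restriction you mention: strong convergence in $\mathcal{X}^s(D)$ already yields strong $L^{p+1}$ convergence for $p+1\le 2^*$. The paper's Vitali argument, on the other hand, is more robust in that it does not rely on monotonicity of the approximating sequence. Both are valid; you also make explicit the positivity $u_\Lambda\ge u_{\lambda_1}>0$, which the paper leaves implicit.
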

\begin{proof}
Suppose $\{\lambda_n\}$ be a sequence such that $\lambda_n \nearrow \Lambda$ as $n \to \infty$. For simplicity, we denote $u_n:= u_{\lambda_{n}}$ as the minimal solution of  $(P_{\lambda_n})$ as obtained in {L}emma \ref{ll3.2}, then the sequence $\{u_n\}$ is increasing w.r.t. $\lambda_n$ and from Lemma \ref{ll3.2}, we know that $J_{\lambda_n}(u_n)<0$. Moreover, this says
\begin{align*}
    0> ~& ~J_{\lambda}(u_n)- \frac{1}{p+1}\langle J'_{\lambda}(u_n), u_n\rangle \\
\geq & \left(\frac{1}{2}-\frac{1}{p+1}\right)\eta(u_n)^2+\lambda_n\left(\frac{1}{p+1}-\frac{1}{q+1}\right)\|u_n\|^{q+1}_{L^{q+1}(\Omega)}\\
\geq & \left(\frac{1}{2}-\frac{1}{p+1}\right)\eta(u_n)^2-\lambda_n C\left(\frac{1}{q+1}-\frac{1}{p+1}\right){\eta(u_n)}^{q+1},~~\text{for some constant $C>0$}.
\end{align*}
From this, it follows that  $\{u_n\}$ is bounded in ${\mathcal{X}^{1,2}(D)}$ which implies that there exists a $u_0\in {\mathcal{X}^{1,2}(D)}$ such that up to a subsequence  $u_n \rightharpoonup u_0$ weakly in  ${\mathcal{X}^{1,2}(D)}$, $u_n \to u_0$ strongly in $L^{r}_{loc}(\mathbb{R}^n)$, where $1\leq r<2^{*}$ and hence $u_n \to u_0$ a.e. in $\R^n$ as $n\to\infty$. 
%Since  $\{u_n\}$ is bounded in ${\mathcal{X}^{1,2}(D)}$ so there exists some $C_2>0$ independent {of} $n$ such that $\|u_n\|^q_{L^{2^*}(\Omega)}\leq C_2 ~\forall~ n\in \mathbb{N}$. 
%If we consider any measurable set $M\subset \Omega$, then we get
%$$\int_{M}\varphi(x) |u_n|^q\,dx\leq C_2 \left(\int_M |\varphi(x)|^{\frac{2^*}{2^*-q}}\,dx\right)^{\frac{2^*-q}{2*}},$$
%where $\varphi\in {\mathcal{X}^{1,2}(D)}$. Since  $\varphi\in L^{\frac{2^*}{2*-q}}{(M)}$ so for any $\epsilon>0$, there {exists} a  $\Theta>0$ such that for any $M\subset \Omega$ satisfying $|M|<\Theta$,
%$$
%\int_M |\varphi(x)|^{\frac{2^*}{2^*-q}}\,dx\leq \frac{\epsilon^{\frac{2^*}{2^*-q}}}{C_2},$$
%hence we deduce that
%$$
%\int_M \varphi(x)|u_n|^q\,dx\leq \epsilon, ~\forall~ n\in %\mathbb{N}.$$
%Therefore, passing the limit as $n \to \infty$ and using Vitali's convergence {T}heorem, we deduce that
%$$
%\lim_{n \to \infty}\int_{\Omega}\varphi(x) |u_n|^q\,dx= \int_{\Omega}\varphi(x) |u_0|^q\,dx.
%$$
Let $\varphi\in \mathcal{X}^{1,2}(D)$ then due to $u_n\to u_0$ a.e. in $\R^n$,  we have
$\varphi |u_n|^p \to \varphi |u|^p$ a.e. in $\R^n$ and
$|\varphi |u_n|^p|\leq |\varphi||u_n|^p$ a.e. in $\R^n$. As, $u_n \to u_0$ in $L^r_{loc}(\R^n)$, $1\leq r< 2^*$, so there exists measurable function $g\in L^r(\Omega)$ such that $|u_n|\leq g$ a.e. in $\Omega.$ Now, we use the H\"older inequality to obtain the following 
$$ \int_{\Omega} |\varphi| g^p\,dx \leq \left(\int_{\Omega} |\varphi|^{\eta}\,dx \right)^{\frac{1}{\eta}} \left(\int_{\Omega} g^{p \Theta'}\,dx \right)^{\frac{1}{\Theta'}}< +\infty, ~\forall~ \varphi\in \mathcal{X}^{1,2}(D),$$
where $\Theta=\frac{2^* -1}{2^*-1-p}$ and its conjugate $\Theta^{'}= \frac{2^*-1}{p}.$ 
Hence, now using the Lebesgue dominated convergence theorem, we have
$$
\lim_{n \to \infty}\int_{\Omega}\varphi(x) |u_n|^p\,dx= \int_{\Omega}\varphi(x) |u_0|^p\,dx.
$$
Similarly, we can show that
$$
\lim_{n \to \infty}\int_{\Omega}\varphi(x) |u_n|^q\,dx= \int_{\Omega}\varphi(x) |u_0|^q\,dx.
$$
As a consequence, by def{i}nition \eqref{new5}, $u_0$ is a solution to problem \eqref{1} for $\lambda=\Lambda.$
\end{proof}
The proof of Theorem \ref{T1.1}(1) follows from {L}emma \ref{ll3.2} and Theorem \ref{T1.1}(2) follows from the definition of $\Lambda$. Lastly, Theorem \ref{T1.1}(3) is a consequence of {L}emma \ref{proof13}.
    \begin{remark}
    If $p\leq 2^* -1$ then by using Theorem \ref{t2.10}, we can easily prove that any solution of \eqref{1} belongs to ${L^{\infty}}({D}).$
    \end{remark}
\subsection{Proof of Theorem \ref{T2.1}}
     This final section is devoted to establishing the existence of a second positive solution to the problem \eqref{1}. We fix $p<2^*-1$ for this section. As discussed earlier, the critical points of $J_\lambda$ are weak solutions of \eqref{1}. To prove Theorem \ref{T2.1}, we will use the celebrated mountain pass {T}heorem.

\begin{lemma}\label{new}
    For all $\lambda \in (0,\Lambda)$, \eqref{1} has a solution $u$ which is in addition a local minimum of $J_\lambda$ in $C^1$ topology.
\end{lemma}
     \begin{proof}
     The proof follows using the arguments of Lemma 4.1 in \cite{ambro} and the version of Hopf's Lemma proved in \cite{antonini}.
     \end{proof}
 Let us now consider the non-empty set(due to Lemma \ref{new})
 \begin{equation}
     X= \{\lambda>0 : J_{\lambda}~ \text{has a local minimum, say} ~u_{0,\lambda}\}
     \end{equation}
    then clearly, if $\lambda\in X$ then $v=0$ is a local minimum of 
     \begin{equation}
     {J^*_{\lambda}(v)}= \frac{1}{2}\eta(v)^2- \int_{\Omega}F_{\lambda}(x,v) dx,   
     \end{equation}
     where 
     $ F_{\lambda}(x,v)= \int^{v}_{0} f_{\lambda}(x,l)~ dl$
     and
    \begin{equation*}
    f_{\lambda}(x,l)=
      \begin{cases}    
       \lambda ((u_{0,\lambda}(x)+l)^{q}-u_{0,\lambda}(x)^q) + (u_{0,\lambda}(x)+l)^{p}-u_{0,\lambda}(x)^p, ~\text{if}~ l\geq 0 ,\quad\\ 
      0,~\text{if}~ l<0.
  \end{cases}
 \end{equation*} 
 Easily, we can see that functional $J^*_{\lambda}$ satisfies the mountain pass geometrical structures. Thus, there exists $v_0\in {\mathcal{X}^{1,2}(D)}$ such that $J^*_{\lambda}(v_0)<0$ and we define the mountain pass level as follows $$c= {\inf_{{\Gamma\in\Upsilon}}\max_{t\in [0,1]}}(J^*_{\lambda}(\Gamma(t))),$$
 where
 $$ \Upsilon=\{ \Gamma \in {C( [0,1], \mathcal{X}^{1,2}({D}))},~ \Gamma(0)=0, \Gamma(1)= v_0\}.$$
 It is well known that $c>0$ and since $p< 2^*-1$, then the functional $J^*_{\lambda}$ satisfies the $PS_c$ conditions (Palais- Smale conditions) and using the Ambrosetti-Rabinowitz {T}heorem, we find  a nontrivial critical point, if $c > 0$. The Ghoussoub-Preiss Theorem is used if $c = 0$, as shown in \cite{MR1030853}.
 As a result, if we start with a local minimum of the functional $J^*_{\lambda}$, we find a second  critical point of the functional $J^*_{\lambda}$ and consequently, a second solution to problem \eqref{1}.
 In order to demonstrate that problem \eqref{1} possesses a second solution for all $\lambda\in (0,\Lambda)$, we employ arguments similar to those presented by Alama in \cite{MR1729392}, taking into account the mixed local and nonlocal characteristics of the operator.
 By utilizing a variational formulation of Perron's method, we establish the presence of a restricted minimum for the functional, followed by demonstrating that this minimum constitutes a local minimum across the entire ${\mathcal{X}^{1,2}(D)}$ space. In order to achieve this, we use a truncation approach, accompanied by energy estimates.

 \vspace{0.5cm}
 We fix $\lambda'\in (0,\Lambda)$ such that $\lambda' < \bar{\lambda} <  \Lambda$. Let us define $u_0, \bar{u}$  to be the minimal solutions
to problem \eqref{1} with $\lambda = \lambda'$ and $\lambda = \bar{\lambda} $ respectively. From {L}emma \ref{ll3.2}, we have
$u_0 < \bar{u}$~{a.e.~ in ~$\R^n$}.\\
Let us define
$$ \mathcal{N}=\{u\in {\mathcal{X}^{1,2}(D)}: ~ 0\leq u(x)\leq \bar{u}(x) ~ a.e.~ in ~{\R^n}\}$$
It is clear that $u_0\in \mathcal{N}$ and that $\mathcal{N}$ is a convex closed subset of ${\mathcal{X}^{1,2}(D)}$. Since, $J_{\lambda'}$
is bounded from below in $\mathcal{N}$ and also lower semi-continuous, then we get the existence of
$\mathcal{V} \in \mathcal{N}$ such that
$$
J_{\lambda'}(\mathcal{V})=\inf_{u\in \mathcal{N}} J_{\lambda'}(u).$$
Suppose $v$ is the unique solution to
problem
\begin{equation}\label{e413}
    \left\{\begin{split} \mathcal{L}u\: &= \lambda' u^q ~~u>0~ \text{in} ~\Omega, \\
      u&=0~~\text{in} ~~{D^c},\\
 \mathcal{N}_s(u)&=0 ~~\text{in} ~~{\Pi_2}, \\
 \frac{\partial u}{\partial \nu}&=0 ~~\text{in}~~ \partial \Omega \cap \overline{\Pi_2}.
    \end{split} \right.
\end{equation}
We have that $J_{\lambda'}(v)<0$. Since $\bar{u}$ is a super solution of \eqref{e413} and from $0\leq v\leq \bar{u}$ a.e. in $\R^n$ then we have $J_{\lambda'}(\mathcal{V})\leq J_{\lambda'}(v)<0$ and $v\in \mathcal{N}$. Thus $\mathcal{V}\neq 0.$ 
Repeating the proof of Theorem 2.4 in \cite{1994michael}, {we can infer that $\mathcal{V}$ is a non trivial solution for problem $(P_{\lambda'})$.}
If $\mathcal{V}\neq u_{0}$, then the proof of Theorem \ref{T2.1} is complete. On the other hand, if $\mathcal{V} = u_0$ then we claim that $\mathcal{V}$ is a local minimum of $J_{\lambda'}$. By contrary, 
we are assuming that there exists a sequence
$u_n \in {\mathcal{X}^{1,2}(D)}$ with
\begin{equation}\label{eeq5.4}
 u_n \to \mathcal{V}~ \text{strongly as}~ n\to \infty ~\text{in}~ {\mathcal{X}^{1,2}(D)} ~\text{and}~ J_{\lambda'}(u_n) < J_{\lambda'}(\mathcal{V}). 
\end{equation} 
Also, let us define
\begin{equation}\label{eqq5.5}
w_n=(u_n-\bar{u})^+=\left\{\begin{matrix} {(u_n-\bar{u})(x),~~\text{if}~~ (u_n-\bar{u})(x)\geq0},\\[2mm]
0,~~~\text{otherwise},
\end{matrix}\right.
\end{equation} and 
\begin{equation}\label{eeqq5.6}
v_n(x)=\max\{0,\min\{u_n,\bar{u}\}\}=
\left\{\begin{matrix} {0, ~~ \text{if}~ u_n(x)\leq 0},\\[2mm]  
~~~~u_n(x), ~~\text{if}~  0\leq u_n(x)\leq \bar{u}(x),\\[2mm]
\bar{u}(x), ~~\text{if}~ u_n(x)\geq \bar{u}(x),
\end{matrix}\right. \end{equation}
and easy to see $v_n\in \mathcal{N}$.
{Thus}  $v_n=(u_n^- + w_n)$, where $u_n^- =\min\{0,u_n\}.$ {Let $H_n=\{x\in \Omega: v_n(x)=u_n(x)\}$ and   
   ${S_n}=(\text{supp}~ w_n) \cap \Omega$}.  
  We identify that $(\text{supp}~ u_n^+)\cap\Omega= H_n \cup S_n$.
 Our claim is that 
 \begin{equation}\label{eq5.6}
     |{S_n}|\to 0 ~\text{as}~ n \to \infty.
 \end{equation}
For $\delta>0$, we define
$$A_n=\{x\in \Omega: u_n(x)\geq\bar{u}(x)>\mathcal{V}(x)+\delta\}~$$ 
and
$$B_n=\{x\in \Omega: u_n(x)\geq\bar{u}(x) ~\text{and}~ \bar{u}(x)\leq \mathcal{V}(x)+\delta\}$$
Since
\begin{align*} 
 0= \left|\{x\in \Omega:\bar{u}(x)<\mathcal{V}(x)\}\right|= {\bigcap}^{\infty}_{k=1}\bigg|\bigg\{{x\in \Omega:\bar{u}(x)<\mathcal{V}(x)+\frac{1}{k}\bigg\}}\bigg|=\lim_{k\to \infty}\bigg|\bigg\{{x\in \Omega:\bar{u}(x)<\mathcal{V}(x)+\frac{1}{k}\bigg\}}\bigg|,
\end{align*}
where we have used Theorem 1.8 of \cite{folland1999real}.
For $\epsilon>0$, there exists $k_0\geq 1$ sufficiently large such that choosing $\delta<\frac{1}{k_0}$, it must be $|\{x\in \Omega: \bar{u}(x)<  \mathcal{V}(x)+\delta\}|\leq\frac{\epsilon}{2}$, which implies that $|B_n|\leq\frac{\epsilon}{2}$, for all $n$. 
On the other hand, from  \eqref{eeq5.4}, we have $u_n\to \mathcal{V}$ in $\mathcal{X}^{1,2}(D)$ as $n\to \infty.$
which gives $\|u_n-\mathcal{V}\|_{L^2(\Omega)} \to 0$
as $n \to \infty$, due to Remark \ref{l1}. For $n\geq n_0$ large enough, we obtain that 
$$ \frac{\epsilon \delta^2}{2}\geq \int_{\Omega} |u_n-\mathcal{V}|^2\,dx\geq \int_{A_n} {\delta}^2\,dx\geq \delta^2 |A_n|,$$
which implies that $|A_n|\leq \frac{\epsilon}{2}$, for any $n\geq n_0$. 
Now, we observe that ${S_n}\subset A_n\cup B_n$ which says  $|{S_n}|\leq |A_n|+|B_n|\leq \frac{\epsilon}{2}+\frac{\epsilon}{2}=\epsilon$, for $n\geq n_0$.
Hence, our claim \eqref{eq5.6} is done.
Let us define  
$$
P(u)=\frac{\lambda' {(u^{+})}^{q+1}}{q+1}+\frac{{(u^{+})^{p+1}}}{p+1}.$$
We can easily see the following inequalities: 
\begin{equation}\label{ea5.8}
    \eta(u_n)^2 \geq {\eta(u_n^+)^2+\eta(u_n^-)^2.}
\end{equation}
 Using \eqref{ea5.8} and $w_n+\bar{u}=u_n$ {on $S_n$, we have
\begin{align*}
J_{\lambda'}(u_n)
&=\frac{1}{2}\eta(u_n)^2-\int_{\Omega} P(u_n)\,dx\geq \frac{1}{2}\eta(u_n^+)^2+\frac{1}{2}\eta(u_n^-)^2-\int_{\Omega} P(u_n)\,dx\\
&= \frac{1}{2}\eta(u_n^+)^2+\frac{1}{2}\eta(u_n^-)^2-\int_{{H_n}} P(v_n)\,dx- \int_{{S_n}} P(u_n)\,dx,~\text{since}~ u_n=v_n~ \text{in}~  H_n\\
&= \frac{1}{2}\eta(u_n^+)^2+\frac{1}{2}\eta(u_n^-)^2-\int_{{H_n}} P(v_n)\,dx-\int_{{S_n}} P(w_n+ \bar{u})\,dx\\
&=\frac{1}{2}\eta(u_n^+)^2+\frac{1}{2}\eta(u_n^-)^2+\int_{\Omega} P(v_n)\,dx+\int_{S_n}P(\bar{u})\,dx-\int_{{S_n}} P(w_n+ \bar{u})\,dx\\
&=J_{\lambda'}(v_n)+\frac{1}{2}\eta(u_n^+)^2+\frac{1}{2}\eta(u_n^-)^2-\frac{1}{2}\eta(v_n)^2-\int_{{S_n}} \left(P(w_n+\bar{u})-P(\bar{u})\right)\,dx,
\end{align*}
since $v_n=\bar{u}$ in $S_n.$ It is easy to see that  $u_n^+= v_n+w_n,$ then
\begin{equation}\label{w1}
    \eta(u_n^+)^2= \eta(v_n+w_n)^2
\end{equation}
and  we deduce that
$$\frac{1}{2}\left(\eta(u_n^+)^2-\eta(v_n)^2\right)=\frac{1}{2}\eta(w_n)^2+\langle v_n, w_n\rangle $$
where,
$$\langle{ v_n},{ w_n}\rangle := \int_{{\Omega}} \nabla v_n\cdot \nabla{w_n} \,dx + \frac{C_{n,s}}{2} \int_{D_{\Omega}} {\dfrac{(v_n(x)-v_n(y)) (w_n(x)-w_n(y))}{|x-y|^{n+2s}}} ~ dx dy. $$
We can check that 
$\{w_n\neq 0\}=\{v_n=\bar{u}\}$
which says that
\begin{equation}\label{w2}
    {\langle v_n, w_n\rangle\geq \int_{\Omega}\mathcal{L}\bar{u}w_n} \,dx\geq {\lambda'}\int_{S_n}\bar{u}^q w_n\,dx+\int_{S_n}\bar{u}^p w_n\,dx
\end{equation}
Therefore,  recalling that $\Bar{u}$ is a supersolution to problem \eqref{1} for $\lambda = {\lambda'}$ then \eqref{w1} and \eqref{w2} suggests
\begin{align*}
J_{\lambda'}(u_n)
&\geq
J_{\lambda'}(v_n)+ \frac{1}{2}\eta(w_n)^2+\langle v_n, w_n\rangle +\frac{1}{2}\eta(u_n^-)^2-\int_{{S_n}} \left(P(w_n+\bar{u})-P(\bar{u})\right)\,dx\\
&\geq {J_{\lambda'}(\mathcal{V})} + \frac{1}{2} \eta(w_n)^2 + \frac{1}{2}\eta(u_n^-)^2  
- \int_{S_n}  \bigl\{ P(w_n + \Bar{u}) -P(\Bar{u})- \lambda'\Bar{u}^{q}w_n -\Bar{u}^pw_n  \bigr\}  dx
\end{align*}
 with the help of inequality $((a+b)^r-b^r\geq r a b^{r-1}$, where $r\geq 1 $ and $a,b\geq 0$), we obtain that
$$  0 \leq \frac{1}{q+1} (w_n + \Bar{u})^{q+1} -\frac{1}{q+1} \Bar{u}^{q+1}  -\Bar{u}^qw_n \leq  \frac{q}{2}\frac{w_n^2}{\Bar{u}^{1-q}} , $$
and using  Theorem \ref{T2.12}, we find that
\begin{equation}\label{11}
\Bar{\lambda} \int_{\Omega} \frac{w_n^2}{\Bar{u}^{1-q}} dx \leq  \int_{\Omega} \frac{w_n^2}{\Bar{u}}\mathcal{L} \Bar{u} \leq \eta(w_n)^2.
 \end{equation}
Then, by using equation \eqref{11}, we obtain that 
$$ \lambda' \int_\Omega \frac{1}{q+1}(w_n +\Bar{u})^{q+1} -\frac{1}{q+1} \Bar{u}^{q+1}  -\Bar{u}^qw_n \leq \frac{q}{2}\frac{w_n^2}{\Bar{u}^{1-q}} \leq \frac{q}{2} \eta(w_n)^2. $$
We observe that $ p+1\geq 2$, then
$$ 0 \leq \frac{1}{p+1}(w_n +\Bar{u})^{p+1} -\frac{1}{p+1} \Bar{u}^{p+1}  -\Bar{u}^pw_n \leq \frac{p}{2} w_n^2 (w_n +\Bar{u})^{p-1} \leq C(\Bar{u}^{p-1}w_n^2 + w_n^{p+1}) $$
Hence, by using the Sobolev inequality and the fact that $|S_n| \to 0 $ as $n \to \infty$, we have
$$ \int_\Omega \Bigl\{ \frac{1}{p+1}(w_n +\Bar{u})^{p+1} -\frac{1}{p+1} \Bar{u}^{p+1}  -\Bar{u}^pw_n \Bigr\} dx \leq o(1)\eta(w_n)^2.  $$
Hence, we can conclude from the above calculations
\begin{align}
J_{\lambda'}(u_n)
&\geq J_{\lambda'}({\mathcal{V}})
+\frac{1}{2}\eta(w_n)^2(1-q-o(1))+\frac{1}{2}\eta(u_n^-)^2\geq J_{\lambda'}({\mathcal{V}})
+\frac{1}{2}\eta(w_n)^2(1-q-o(1))+o(1).
\end{align}
So, we get that
$$
0>J_{\lambda'}(u_n)-J_{\lambda'}(\mathcal{V})\geq 
\frac{1}{2}\eta(w_n)^2(1-q-o(1))+\frac{1}{2}\eta(u_n^-)^2
$$
Since $q\in (0,1)$, we deduce that $w_n=u_n^-=0$ for sufficiently large value of $n$, recalling $u_n\in \mathcal{N}$ and 
then we get
$$
J_{\lambda'}(u_n)\geq J_{\lambda'}(\mathcal{V}),$$ which is a contradiction with 
$J_{\lambda'}(u_n)<J_{\lambda'}(\mathcal{V})$.

Hence we have $\mathcal{V}$ is a local minimum for functional $J_{\lambda'}$ and $J^*_{\lambda'}$ has $u=0$ as a local minimum and then $J^{*}_{\lambda'}$
 has a nontrivial solution $u'$. As a consequence, $u = \mathcal{V} + u'$ is a solution to problem \eqref{1}, which is different from $\mathcal{V}$. Hence our proof of Theorem \ref{T2.1} is completed now.

 %%%%%%%%%%%%%%%%%%%%%%%%%%%%%%%%%%%%%%%%%%%%%%%%%%%%%%%%%%%%%%%%%%%%%%%%%%%%%%%%%%%%%%%%%%%%%%%%%%%%%%%%%%%%%%%%
 \vspace{0.2cm}

\section{Acknowledgement}
 We are grateful to Prof. Jacques Giacomoni for his valuable suggestions which helped us to complete this article.

\section{Ethics approval and consent to participate}
Not Applicable

\section{Funding}
T. Mukherjee acknowledges the support of the Start-up Research Grant received from DST-SERB, sanction no. SRG/2022/000524.

% \bibliography{lovelesh.bib}
%  \bibliographystyle{abbrv}
\end{document}